

\documentclass[reqno,12pt,a4paper]{amsart}
\usepackage{pgfplots}
\usepackage[english]{babel}
\usepackage{amsmath,amsthm,amssymb,amsfonts, mathdots}
\usepackage{scrtime, cancel}
\usepackage{enumitem, color}

\usepackage{mathtools}
\usepackage{ifpdf}
\ifpdf
\usepackage[backref]{hyperref}
\else
\usepackage[hypertex]{hyperref}
\fi
\usepackage{pdfsync,verbatim}
\usepackage{color}
\mathtoolsset{showonlyrefs}

\numberwithin{equation}{section}   

\allowdisplaybreaks
\newtheorem{theorem}{Theorem}[section]
\newtheorem{lemma}[theorem]{Lemma}
\newtheorem{proposition}[theorem]{Proposition}
\newtheorem{corollary}[theorem]{Corollary}

\theoremstyle{definition}

\addtolength{\headheight}{3.2pt}    
\textwidth=15cm
\textheight=22cm

\setlength{\oddsidemargin}{15.5pt} 
\setlength{\evensidemargin}{15.5pt}

\newcommand{\R}{\mathbb{R}}

\newcommand{\N}{\mathbb{N}}

\newcommand{\ellipses}{E}
\def\misgausskd{d \gamma_\infty}
\def\misgaussk{\gamma_\infty}
\DeclareMathOperator{\tr}{tr}             
\newcount\dotcnt\newdimen\deltay
\def\Ddot#1#2(#3,#4,#5,#6){\deltay=#6\setbox1=\hbox to0pt{\smash{\dotcnt=1
\kern#3\loop\raise\dotcnt\deltay\hbox to0pt{\hss#2}\kern#5\ifnum\dotcnt<#1
\advance\dotcnt 1\repeat}\hss}\setbox2=\vtop{\box1}\ht2=#4\box2}

\newcommand{\Red}{\color{red}}

\title[Ornstein-Uhlenbeck maximal operator ]{
On the maximal operator\\ of a general Ornstein--Uhlenbeck  semigroup }
\subjclass[2000]{47D03, 
42B25 
 }
\author{Valentina Casarino}
\address{Universit\`a degli Studi di Padova\\Stradella san Nicola 3 \\I-36100 Vicenza \\ Italy}
\email{valentina.casarino@unipd.it}
\author{Paolo Ciatti}
\address{Universit\`a degli Studi di Padova\\Via Marzolo 9 \\I-35100 Padova \\ Italy}
\email{paolo.ciatti@unipd.it}
\author{Peter Sj\"ogren}
\address{Mathematical Sciences,  University of Gothenburg and  Mathematical Sciences\\ \hspace*{9pt} Chalmers University of Technology  \\ SE - 412 96 G\"oteborg, Sweden}
\email{peters@chalmers.se}

\thanks{The first and the second author were partially supported by GNAMPA (Project 2018 
``Operatori e disuguaglianze integrali in spazi con simmetrie") 
and MIUR (PRIN 2016 ``Real and Complex Manifolds: Geometry, Topology
and Harmonic Analysis").
This research was carried out while the third author was a Visiting Scientist at the University of Padova,  Italy, and he is grateful for its hospitality.}
\date{\today, \thistime}
\keywords{{{Ornstein--Uhlenbeck semigroup,  maximal operator, Gaussian measure, 
Mehler kernel, weak type $(1,1)$.}}}

\begin{document}

\begin{abstract}
If
 $Q$ is  a real, symmetric and positive definite $n\times n$ matrix, and
$B$ a real $n\times n$   matrix whose eigenvalues have negative real parts,
we consider the Ornstein--Uhlenbeck
semigroup  on $\R^n$ with covariance $Q$
and
 drift matrix $B$.
Our main result says that the associated maximal operator
is of weak type $(1,1)$
with respect to the invariant measure.
The proof has a geometric gist and hinges on the
 ``forbidden zones method'' previously introduced by the third author.

\end{abstract}

\maketitle

\section{Introduction}\label{intro}
In this paper we prove a weak type $(1,1)$ theorem for the maximal operator associated to a general Ornstein--Uhlenbeck semigroup. We extend the
proof given by the third author in 1983 in a symmetric context. Our setting is  the following.

In $\R^n$ we will consider the semigroup generated by the elliptic operator
\[\mathcal L=
\frac12
\sum_{i,j=1}^n
q_{ij}
\frac{\partial^2}{\partial x_i \partial x_j}
+
\sum_{i,j=1}^n
b_{ij}
x_i\frac{\partial}{ \partial x_j},
\]
or, equivalently,
\[\mathcal L=
\frac12
\mathrm{tr}
\big( Q\nabla^2 \big)+\langle Bx, \nabla \rangle,\]
where $\nabla$ is the gradient and $\nabla^2$ the Hessian. 
Here $Q = (q_{ij})$ is  a real, symmetric and positive definite $n\times n$
 matrix, indicating the covariance of $\mathcal L$.
The  real $n\times n$   matrix $B = (b_{ij})$ is  negative in the sense that
all its eigenvalues have negative real parts, 
and it gives the drift of  $\mathcal L$.

The semigroup is formally
$
\mathcal H_t=e^{t\mathcal L}$,
${t> 0}$, 
but to  write it more explicitly
we first introduce the 
positive definite, symmetric matrices
\begin{equation}\label{defQt}
Q_t=\int_0^t e^{sB}Qe^{sB^*}ds, \qquad \text{ $0<t\leq +\infty$},
\end{equation}
and the 
normalized Gaussian measures in $\R^n$  $\gamma_t $, with $t\in (0,+\infty]$,   having density
$$
y\mapsto (2\pi)^{-\frac{n}{2}}
(\text{det} \, Q_t)^{-\frac{1}{2} }
\exp\left({-\frac12 \langle Q_t^{-1}y,y\rangle}\right)
$$
with respect to Lebesgue measure.
Then for functions $f$ 
 in the space
of bounded continuous functions in $\R^n$
one has
\begin{equation}\label{Kolmo}
\mathcal H_t
f(x)=
\int  
f(e^{tB}x-y)d\gamma_t (y)\,, \quad x\in\R^n\,,
\end{equation}
a formula due to Kolmogorov.
The measure $\gamma_\infty$ is invariant under the action of  $\mathcal H_t$;
it will be our basic measure, replacing Lebesgue measure.

We remark that  $\big(
\mathcal H_t
\big)_{t> 0}$ 
is the transition semigroup of the stochastic process 
\[\chi (x,t)=
e^{tB}+
\int_0^t
e^{(t-s)B}
\,
dW(s),\]
where $W$ is a Brownian motion in $\R^n$ with covariance $Q$.

 We are interested in  the
 maximal operator defined as
 \begin{align*}
\mathcal H_*
f(x)=
\sup_{t> 0}
\big|
\mathcal H_t
f(x)
\big|.
\end{align*}
Under the above assumptions on $Q$ and $B$,
our main result is the following.
\begin{theorem}\label{weaktype1}
The Ornstein--Uhlenbeck maximal operator
$\mathcal H_*$
 is of weak type $(1,1)$ with respect to the invariant measure $\gamma_\infty$, with an operator quasinorm
 that depends only on the dimension and the matrices $Q$ and $B$.
\end{theorem}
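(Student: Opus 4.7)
The plan is to realise $\mathcal H_t$ as an integral operator against the invariant measure $\gamma_\infty$. Combining the Kolmogorov formula \eqref{Kolmo} with the substitution $z=e^{tB}x-y$, one writes
\[
\mathcal H_t f(x)=\int K_t(x,y)\,f(y)\,d\gamma_\infty(y),
\]
where the Mehler-type kernel is the ratio of the density of $\gamma_t$ centred at $e^{tB}x$ to that of $\gamma_\infty$:
\[
K_t(x,y)=\frac{(\det Q_\infty)^{1/2}}{(\det Q_t)^{1/2}}\exp\!\Bigl(\tfrac12\langle Q_\infty^{-1}y,y\rangle-\tfrac12\langle Q_t^{-1}(y-e^{tB}x),\,y-e^{tB}x\rangle\Bigr).
\]
The target bound is $\gamma_\infty\{\mathcal H_*f>\lambda\}\leq C\lambda^{-1}\|f\|_{L^1(\gamma_\infty)}$. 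Assuming $f\geq 0$, I would split $\sup_{t>0}$ into a local piece $\sup_{0<t\leq t_0}$ and a global piece $\sup_{t>t_0}$ for some threshold $t_0$ chosen in terms of $Q$ and $B$ alone.

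For the local piece, when $t\leq t_0$ the density of $\gamma_t$ concentrates in a $\sqrt{t}$-neighbourhood of the origin, $e^{tB}x$ stays close to $x$, and on bounded sets $\gamma_\infty$ is equivalent to Lebesgue measure. The local maximal operator is then dominated pointwise by a standard Hardy-Littlewood type convolution maximal operator, whose weak type $(1,1)$ is classical Calder\'on-Zygmund theory; one has to pay attention to making the constants uniform in $x$, which follows from the boundedness of $t^{-1}Q_t$ and of $e^{tB}$ for $0<t\leq t_0$.

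For the global piece one invokes the forbidden zones method of the third author. For each fixed $t>t_0$, the kernel $K_t(x,y)$ is appreciable only on an ellipsoid of the form $\langle Q_t^{-1}(y-e^{tB}x),\,y-e^{tB}x\rangle\lesssim 1$, while the exponential factor $\exp(\langle Q_\infty^{-1}y,y\rangle/2)$ imposes an additional restriction tying the size of $y$ to the drift $e^{tB}x$. The idea is to identify, for each $x$, a "forbidden zone" in $y$ outside of which $\sup_{t>t_0}K_t(x,y)$ is negligible, and for $y$ inside the zone to cover the set of admissible pairs $(t,y)$ by an efficient family of ellipsoids. Summing the $\gamma_\infty$-measures over this covering, together with a reversal of the roles of $x$ and $y$ via the symmetry of $\gamma_\infty$, yields the required weak type bound.

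The principal obstacle is geometric. In the 1983 symmetric setting one had $B=-I$ and $Q=I$, so $e^{tB}$ is a scalar contraction and all the relevant ellipsoids are Euclidean balls, reducing the analysis essentially to a one-dimensional argument in the radial variable. In the present generality, $e^{tB}$ may rotate and contract $\R^n$ anisotropically (complex eigenvalues with various real parts), and $Q_t$, $Q_\infty$ are genuinely distinct ellipsoids that evolve with $t$. Implementing the forbidden zones method therefore requires working in Mahalanobis-type coordinates adapted to $Q_\infty^{-1}$, decomposing $\R^n$ according to the spectral structure of $B$, and obtaining uniform control on how $e^{tB}$ deforms the $Q_t$-ellipsoids. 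The core technical work is to show that, despite this anisotropy, the covering of the level sets of $K_t(x,\cdot)$ as $t$ varies over $(t_0,\infty)$ has controlled $\gamma_\infty$-overlap, so that the sum of measures produces the desired $\lambda^{-1}\|f\|_{L^1(\gamma_\infty)}$ bound uniformly in $x$.
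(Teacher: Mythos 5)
Your decomposition has a structural mismatch with what actually makes the argument work, and one of your steps would fail. You split only on $t$, taking a local piece $t\le t_0$ and a global piece $t>t_0$, and you attribute the forbidden-zones machinery to the global (large-$t$) piece. The paper instead splits the \emph{small}-$t$ operator further according to a local/global dichotomy in the variables $(x,u)$: the local region $L=\{|x-u|\le 1/(1+|x|)\}$ and its complement $G$. The claim that for $t\le t_0$ one is dominated by a Hardy--Littlewood type convolution maximal operator is only true on $L$: for general $(x,u)$ the factor $e^{R(x)}$ in the kernel bound $K_t(x,u)\lesssim t^{-n/2}e^{R(x)}\exp(-c|u-D_tx|^2/t)$ is unbounded, and the mismatch between $e^{R(x)}$ and $e^{-R(u)}$ destroys the comparison with a Lebesgue-measure maximal function as soon as $|x-u|$ is not small compared to $1/(1+|x|)$. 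The forbidden-zones construction is precisely what is needed for the remaining small-$t$, global-$(x,u)$ piece, not for large $t$: there one builds, by a greedy selection of points $x^{(\ell)}$ minimizing $R$ together with disjoint balls $\mathcal B^{(\ell)}$ in the $u$-variable, tubular zones $\mathcal Z^{(\ell)}$ in an adapted polar coordinate system $x=D_s\tilde x$ with $D_s=Q_\infty e^{-sB^*}Q_\infty^{-1}$, and one shows $\gamma_\infty(\mathcal Z^{(\ell)})\lesssim 2^{Cm}\alpha^{-1}\int_{\mathcal B^{(\ell)}}f\,d\gamma_\infty$.

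The large-$t$ piece is actually handled by a different and much more direct argument: for $t>1$ the Mehler kernel has no $t^{-n/2}$ singularity and is bounded above and below by $e^{R(x)}\exp(-c|D_{-t}u-x|^2)$, and a computation in the polar coordinates (with the level-set radius $\beta=\log\alpha$) gives directly the even sharper estimate $\gamma_\infty\{\sup_{t>1}|\mathcal H_tf|>\alpha\}\lesssim 1/(\alpha\sqrt{\log\alpha})$, with no covering lemma needed. Finally, the ``reversal of the roles of $x$ and $y$ via the symmetry of $\gamma_\infty$'' that you invoke is not available here: although the measure $\gamma_\infty$ is invariant, the kernel $K_t(x,u)$ is not symmetric in $(x,u)$ because the semigroup is not self-adjoint, so duality cannot be used to move the analysis from one variable to the other. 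Your intuition about the need for coordinates adapted to $Q_\infty$ and controlled deformation by $e^{tB}$ is on the right track (the group $D_s$ and the level ellipsoids $\{R=\beta\}$ serve exactly this purpose), but the decomposition that makes the forbidden-zones method bite is the $(x,u)$ local/global one at small times, not a large-$t$ covering.
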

In other words, the inequality
\begin{equation} \label{thesis-mixed-Di}  
\gamma_\infty
\{x\in\R^n : \mathcal H_*f(x) > \alpha\} \le \frac{C}\alpha\,\|f\|_{L^1( \gamma_\infty)},\qquad \text{ $\alpha>0$,}
\end{equation}
holds for all functions $f\in L^1 (\gamma_\infty)$,
with $C=C(n,Q,B)$.

For large values of the time parameter, 
we also obtain a refinement of this result.
Indeed, we prove
in Proposition \ref{propo-mixed-glob-enhanced}
 that \begin{equation}\label{Eldan-est}
\gamma_\infty
 \left\{x\in\R^n : \sup_{t> 1}|\mathcal H_t f(x)| > \alpha\right\} \le 
 \frac{C}{ \alpha\
{\sqrt{\log\alpha}}}\,
\end{equation}
for large $\alpha>0$ and all normalized functions 
$f\in L^1 (\gamma_\infty)$. Here  $C=C(n,Q,B)$, 
and this estimate
is shown to be sharp. It cannot be extended 
to $\mathcal H_*$, since the 
maximal operator corresponding to small values 
of $t$ 
only satisfies the ordinary weak type 
inequality.
This sharpening  is not surprising, in the light of  some recent results for the standard case  $Q=I$ and $B=-I$ by   Lehec \cite{Lehec}. He proved the following conjecture, recently proposed by 
Ball,  Barthe, Bednorz, Oleszkiewicz and  Wolff
\cite{Barthe}:
For each fixed $t>0$, there exists a function $\psi_t=\psi_t(\alpha)$, with
$\displaystyle\lim_{\alpha\to +\infty} \psi_t (\alpha)=0$, satisfying
\begin{equation} \label{thesis-mixed-Talagrand}  
\gamma_\infty
\{x\in\R^n : |\mathcal H_t f(x)| > \alpha\} \le \frac{\psi_t
(\alpha)}\alpha\,
\end{equation}
for all large $\alpha>0$  and all 
$f\in L^1(\gamma_\infty)$ such that $ \|f\|_{L^1( \gamma_\infty)}=1$.
Lehec proved this conjecture with $\psi_t (\alpha)={C(t)}/{\sqrt{\log\alpha}}$ 
 independent of  the dimension, and this $\psi_t$ is  sharp.
Our estimates depend strongly on the dimension $n$, but on the other hand we estimate the supremum over large $t$.    
\medskip

The history of $\mathcal H_*$           
is quite long 
 and started 
with the first attempts to prove  $L^p$ estimates.
When
 $\big(
\mathcal H_t
\big)_{t> 0}\,$
is  symmetric, i.e., when each operator $\mathcal H_t $ is self-adjoint on $L^2 (\gamma_\infty)$,
then $\mathcal H_*$ is bounded on $L^p (\gamma_\infty)$
for  $1<p\le \infty$,
 as a consequence of the general Littlewood--Paley--Stein theory 
for symmetric semigroups of contractions on $L^p$ spaces \cite[Ch. III]{Stein}.

It is easy to see that the  maximal operator
is unbounded on $L^1 (\gamma_\infty)$. This led, about fifty years ago, to the study of
the weak type $(1,1)$ of $\mathcal H_*$ with respect to $\gamma_\infty$.
The first positive result is due to B. Muckenhoupt  \cite{Muckenhoupt}, who proved the estimate  \eqref{thesis-mixed-Di}
in the one-dimensional case with $Q=I$ and $B=-I$.
  The analogous question in the higher-dimensional case was an open problem until 1983, when
 the third author \cite{Peter} proved  the weak type $(1,1)$ in any finite dimension. 
Other proofs are due to
Men\'arguez,  P\'erez and  Soria
 \cite{Soria} (see also \cite{SoriaCR, Soria-Perez}) and  to  Garc\`ia-Cuerva,  Mauceri,  Meda, Sj\"ogren and  Torrea
\cite{JLMS}. 
Moreover, a different proof of the weak type $(1,1)$ of $\mathcal H_*$, based on a covering lemma
 halfway between covering  results by Besicovitch and Wiener,
was given by Aimar, Forzani and Scotto
 \cite{Scotto}.
 A nice overview of the literature may be found in \cite[Ch.4]{Urbina-monograph}.
 
 In \cite{CCS} the present authors recently considered
 a
 normal
 Ornstein--Uhlenbeck
 semigroup
  in $\R^n$,
that is, we assumed that 
$\mathcal H_t$ is for each $t> 0$
 a  normal operator on $L^2 (\gamma_\infty)$. Under this extra assumption, we proved that
 the associated maximal operator is  of weak type $(1, 1)$ with respect to the invariant measure  $\gamma_\infty$. 
 This extends  earlier work in the non-symmetric framework by  Mauceri and Noselli  \cite{Mauceri-Noselli},
who proved some ten years ago that, if $Q=I$ and 
$B=\lambda (R-I)$ for some  positive $\lambda$
and a real skew-symmetric matrix $R$ generating a periodic group,
then the maximal operator $ \mathcal H_* $  is of weak type $(1,1)$.
 
 \medskip
In Theorem \ref{weaktype1} 
we go beyond the hypothesis of normality.
The proof has a geometric core and relies on the {\it{ad hoc}}
technique developed by the third author in \cite{Peter}.
It is worth noticing that, while the proof in \cite{CCS} required  an analysis of the special case
when $Q=I$ and $B=(-\lambda_1, \ldots, -\lambda_n)$, with $\lambda_j>0$ for $j=1, \ldots, n$, and then the application of   factorization results,
 we apply here directly, avoiding many intermediate steps, the "forbidden zones" technique introduced in \cite{Peter}.  
 
Since the maximal operator $\mathcal H_* $ is trivially bounded from 
$L^{\infty }$ to 
$L^{\infty }$, 
  we obtain   by interpolation the following corollary.

\begin{corollary}\label{cor:strongLp}
The Ornstein--Uhlenbeck maximal operator
$\mathcal H_*$
 is bounded on $L^p (\gamma_\infty)$ for all 
$p>1$.
\end{corollary}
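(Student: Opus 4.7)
The plan is to deduce the corollary from Theorem \ref{weaktype1} by a standard Marcinkiewicz interpolation argument between the two endpoints $p=1$ and $p=\infty$. First I would verify that $\mathcal H_*$ is a sublinear operator: for any $f,g$ and $\lambda\in\R$ one has $\mathcal H_t(f+g) = \mathcal H_tf+\mathcal H_tg$ (so that $|\mathcal H_t(f+g)|\leq |\mathcal H_tf|+|\mathcal H_tg|$) and $\mathcal H_t(\lambda f)=\lambda \mathcal H_tf$, whence taking suprema in $t>0$ gives sublinearity of $\mathcal H_*$.

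Next I would pin down the two endpoint estimates. The strong $(\infty,\infty)$ bound is immediate from Kolmogorov's formula \eqref{Kolmo}: since $\gamma_t$ is a probability measure for every $t>0$, one has
\[
|\mathcal H_tf(x)|\leq \int |f(e^{tB}x-y)|\,d\gamma_t(y)\leq \|f\|_{L^\infty(\gamma_\infty)}
\]
for every $x\in\R^n$ (noting that $\gamma_\infty$ has full support, so the essential supremum is unchanged). Taking the supremum in $t$ yields $\|\mathcal H_*f\|_{L^\infty(\gamma_\infty)}\leq \|f\|_{L^\infty(\gamma_\infty)}$. The weak $(1,1)$ bound with respect to $\gamma_\infty$ is exactly the content of Theorem \ref{weaktype1}.

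With both endpoint bounds in hand, I would invoke the Marcinkiewicz interpolation theorem for sublinear operators on the measure space $(\R^n,\gamma_\infty)$. Since $\gamma_\infty$ is a finite measure, there is no subtlety: interpolation yields the strong-type estimate
\[
\|\mathcal H_*f\|_{L^p(\gamma_\infty)}\leq C_p\,\|f\|_{L^p(\gamma_\infty)}
\]
for every $1<p<\infty$, with $C_p=C_p(n,Q,B)$ depending only on $p$, the dimension, and the matrices $Q$ and $B$ through the quasinorm in Theorem \ref{weaktype1}. There is essentially no obstacle here; the only point to be careful about is the sublinearity of the supremum and the use of the invariant measure (rather than Lebesgue) on both sides, which is consistent with the formulation of Theorem \ref{weaktype1}.
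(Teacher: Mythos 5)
Your argument is exactly the one the paper uses: the $L^\infty\to L^\infty$ bound is immediate from Kolmogorov's formula (or, as the paper notes, trivially), the weak $(1,1)$ bound is Theorem \ref{weaktype1}, and Marcinkiewicz interpolation for the sublinear operator $\mathcal H_*$ on the finite measure space $(\R^n,\gamma_\infty)$ gives $L^p$ boundedness for all $1<p<\infty$. Your writeup is correct and matches the paper's (one-line) justification.
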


This result improves   Theorem 4.2 in \cite{Mauceri-Noselli},
where the $L^p$ boundedness of 
 $\mathcal H_*$
is proved  for all $p>1$
in the normal framework, under the additional assumption that the infinitesimal generator of  $\big(
\mathcal H_t
\big)_{t> 0}\,$ is a sectorial operator of angle less than $\pi/2$.

\medskip

In this paper we focus our attention on the Ornstein--Uhlenbeck
semigroup
in $\R^n$.
In view of possible applications to  stochastic analysis and to SPDE's,
it would be very  interesting to investigate the case of
 the infinite-dimensional Ornstein-Uhlenbeck maximal operator as well
 (see \cite{Goldys,  Neerven, Carbonaro-Oliver} for an introduction
to the infinite-dimensional setting).
The Riesz transforms
associated to a general 
Ornstein--Uhlenbeck semigroup in $\R^n$  will be considered in a forthcoming paper.
\medskip

The scheme of the paper is as follows.
In  Section
\ref{s:particular} 
we introduce the Mehler kernel $K_t(x,u)$, that is, the integral kernel of $\mathcal H_t$.
 Some  estimates  for the norm and the determinant of $Q_t$ and  related matrices are provided in Section \ref{s:Some auxiliary results}. 
As a consequence, we obtain 
bounds for the Mehler  kernel.
In Section \ref{s:Geometric aspects of the problem} we consider the relevant geometric features of the problem, and
  introduce
 in Subsection \ref{coord-choice}  a 
system of polar-like coordinates. We also express  
 Lebesgue measure in terms of these coordinates.
Sections  \ref{simplifications},
 \ref{s:mixed. t large}, \ref{s:The local case} and \ref{s:The global case}
are devoted to the  proof of Theorem~\ref{weaktype1}.  First,
Section \ref{simplifications}  introduces 
some preliminary simplifications  of the proof; in particular, 
we restrict the variable $x$
to an ellipsoidal annulus.
In Section 
 \ref{s:mixed. t large} 
 we consider
 the supremum in the definition of the maximal operator taken only 
over $t> 1$ and prove the sharp  estimate \eqref{Eldan-est}.
Section \ref{s:The local case} is devoted to the case of  small $t$
under an additional local condition. Finally, in Section \ref{s:The global case}
we treat the remaining case and
conclude the proof of Theorem \ref{weaktype1},
by proving the
estimate \eqref{thesis-mixed-Di} for small $t$ under a global assumption.

\medskip

In the following, we use the ``variable constant convention'', according to which
the symbols $c>0$ and $C<\infty$
will denote constants 
which are not necessarily 
equal 
at 
different 
occurrences. They all  depend only on the dimension and on $Q$ and $B$.
For any two nonnegative quantities $a$ and $b$ we write $a\lesssim b$  instead of $a \leq C b$ 
and $a \gtrsim b$ instead of $a \geq c b$.
The symbol $a\simeq b$ means that both $a\lesssim b$ and
$a \gtrsim b$  hold.

By $\N$ we mean the set of all nonnegative integers. 
 If $A$ is  an $n\times n$ matrix, we write $\|A\|$
for its operator norm on $\R^n$
with the Euclidean norm $|\cdot|$.

 \bigskip

\section{{ The Mehler Kernel  
}}\label{s:particular}

For $t>0$,  
the difference
\begin{equation}\label{eq:diff_q_t}
Q_\infty -Q_t =\int_t^{\infty}e^{sB}Qe^{sB^*}ds
\end{equation}
is a symmetric and strictly positive definite matrix. So is the matrix
\begin{equation}\label{diff-oper-inv}  
Q_t ^{-1} -  Q_\infty^{-1} = Q_t^{-1}( Q_\infty -Q_t )Q_\infty^{-1},
\end{equation}
and we can define
\begin{equation}\label{def:Dtx}
D_t =
(Q_t^{-1}-Q_\infty^{-1}
)^{-1} Q_t^{-1} 
e^{tB}\,.
\end{equation}

Then formula  \eqref{Kolmo}, the definition of the Gaussian measure  and some elementary computations yield
\begin{align}
\mathcal H_t&
f(x)
=
(2\pi)^{-\frac{n}{2}}
(\text{det} \, Q_t)^{-\frac{1}{2} }
\int  
f(e^{tB}x-y)
\exp\left[
{-\frac12 \langle Q_t^{-1}y,y\rangle}\right]
dy\notag\\
&=
\Big(
\frac{\text{det} \, Q_\infty}{\text{det} \, Q_t}
\Big)^{{1}/{2} }
\exp\left[
{\frac12 \langle Q_t^{-1} e^{tB}x,  D_t x-e^{tB}x\rangle}\right]
\notag\\
&\quad \times \int  
f(u) \,
\exp\left[
{\frac12 
\langle (Q_\infty^{-1}-
Q_t^{-1}) (u-D_t x)\,,\, u-D_t x\rangle}\right]
d\gamma_\infty (u)\,,
\label{def:Htq-nonfinale}
\end{align}
where
we repeatedly used the fact that
$Q_\infty^{-1}-
Q_t^{-1}$
is symmetric.
We  now express the matrix $D_t$ in various ways.

\begin{lemma}\label{lemma:expr_D}
For all $x\in\R^n$  and $t>0$ 
we have
\begin{enumerate}[label=(\roman*)]
\item[{\rm{(i)}}]
$ D_t =
 Q_\infty
 e^{-tB^*} Q_\infty^{-1} 
$;
\item[{\rm{(ii)}}]
$D_t = e^{tB} + Q_t e^{-tB^*}Q_\infty^{-1}$.
 \end{enumerate}

\end{lemma}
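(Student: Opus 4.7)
The plan is to first rewrite $D_t$ without the inverse of a difference, using the factorization already given in \eqref{diff-oper-inv}. Since $Q_t^{-1}-Q_\infty^{-1}=Q_t^{-1}(Q_\infty-Q_t)Q_\infty^{-1}$, I can invert this to get $(Q_t^{-1}-Q_\infty^{-1})^{-1}=Q_\infty(Q_\infty-Q_t)^{-1}Q_t$, so that
\begin{equation*}
D_t=Q_\infty(Q_\infty-Q_t)^{-1}Q_t\cdot Q_t^{-1}e^{tB}=Q_\infty(Q_\infty-Q_t)^{-1}e^{tB}.
\end{equation*}
This reduces the problem to understanding the matrix $Q_\infty-Q_t$.

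Next, I would exploit the definition \eqref{defQt} of $Q_t$ together with \eqref{eq:diff_q_t}. A change of variable $s\mapsto t+r$ in the integral for $Q_\infty-Q_t$ pulls the factors $e^{tB}$ and $e^{tB^*}$ outside, producing the identity
\begin{equation*}
Q_\infty-Q_t=e^{tB}\left(\int_0^\infty e^{rB}Qe^{rB^*}\,dr\right)e^{tB^*}=e^{tB}Q_\infty e^{tB^*}.
\end{equation*}
Equivalently, $Q_\infty=Q_t+e^{tB}Q_\infty e^{tB^*}$, which is the standard decomposition of the stationary covariance. Inverting this, $(Q_\infty-Q_t)^{-1}=e^{-tB^*}Q_\infty^{-1}e^{-tB}$, and substituting into the above expression for $D_t$ the factor $e^{-tB}e^{tB}$ cancels, yielding statement (i):
\begin{equation*}
D_t=Q_\infty\cdot e^{-tB^*}Q_\infty^{-1}e^{-tB}\cdot e^{tB}=Q_\infty e^{-tB^*}Q_\infty^{-1}.
\end{equation*}

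For (ii), I only have to show that $Q_\infty e^{-tB^*}Q_\infty^{-1}-Q_te^{-tB^*}Q_\infty^{-1}=e^{tB}$, i.e.\ that $(Q_\infty-Q_t)e^{-tB^*}Q_\infty^{-1}=e^{tB}$. But this follows immediately from the identity $Q_\infty-Q_t=e^{tB}Q_\infty e^{tB^*}$ established above, since $e^{tB^*}e^{-tB^*}=I$ and $Q_\infty Q_\infty^{-1}=I$.

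There is no real obstacle here: the content of the lemma is purely algebraic manipulation of the definitions, and the only nontrivial ingredient is the invariance identity $Q_\infty-Q_t=e^{tB}Q_\infty e^{tB^*}$, which is a direct consequence of the integral formula \eqref{defQt} and reflects the semigroup property of the covariances.
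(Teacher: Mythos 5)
Your proof is correct and follows essentially the same route as the paper: you derive $Q_\infty - Q_t = e^{tB}Q_\infty e^{tB^*}$ (the paper's equation \eqref{Metafune}) from the integral formula, rewrite $D_t = Q_\infty(Q_\infty - Q_t)^{-1}e^{tB}$ using \eqref{diff-oper-inv}, substitute to get (i), and obtain (ii) by right-multiplying the same identity by $e^{-tB^*}Q_\infty^{-1}$. The only cosmetic difference is that you invert $Q_\infty - Q_t$ explicitly before substituting, while the paper substitutes $e^{tB}Q_\infty e^{tB^*}$ directly; the algebra is identical in content.
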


\begin{proof}

{\rm{(i)}}
Formulae \eqref{eq:diff_q_t}  and \eqref{defQt} imply
\begin{equation}
\label{Metafune}Q_\infty- Q_t=
e^{tB} Q_\infty e^{tB^*}
\end{equation}
(see also \cite[formula (2.1)]{MPRS}).
From    \eqref{def:Dtx} and \eqref{diff-oper-inv} it follows that
\[ {D_t  =
 Q_\infty
 ( Q_\infty-Q_t)^{-1}\,
e^{tB},}\] and combining this with \eqref{Metafune}  we arrive at {\rm{(i)}}.

{\rm{(ii)}}
Multiplying \eqref{Metafune} by $ e^{-tB^*} Q_\infty^{-1}$ from the right, we 
obtain
 \begin{align*}
 Q_\infty e^{-tB^*}Q_\infty^{-1}- Q_t  e^{-tB^*}Q_\infty^{-1} = e^{tB},
\end{align*}
and {\rm{(ii)}} now follows from {\rm{(i)}}.
\end{proof}

By means of   {\rm{(i)}}  in this lemma, we can define  $D_t$ for all 
$t\in\R $, and they will form a one-parameter group of matrices.

\smallskip

Now {\rm{(ii)}}  in Lemma \ref{lemma:expr_D}
yields  \begin{align*}
 \langle Q_t^{-1} e^{tB}x,  D_t x-e^{tB}x\rangle=
 \langle Q_t^{-1} e^{tB}x,   Q_t e^{-tB^*}Q_\infty^{-1}x \rangle
=\langle   Q_\infty^{-1}x ,x \rangle.
 \end{align*}
Thus \eqref{def:Htq-nonfinale} may be rewritten as
\begin{align*}
 \mathcal H_t
f(x) &=
 \int
K_t
(x,u)\,
f(u)\,
 d\gamma_\infty(u)  
  \,,
\end{align*}
where  $K_t$ denotes  the Mehler  kernel,
given by 
\begin{align}\label{defKR}
&K_t (x,u)\notag\\
&=
\Big(
\frac{\text{det} \, Q_\infty}{\text{det} \, Q_t}
\Big)^{{1}/{2} }
\exp
{
\big( R(x)\big)}
\times\exp \Big[
{-\frac12 
\left\langle (
Q_t^{-1}-Q_\infty^{-1}) (u-D_t x) \,,\, u-D_t x\right\rangle}\Big]
\,\qquad
\end{align}
for $x,u\in\R^n$. Here 
we  introduced
the quadratic form 
\begin{equation*}
R(x) ={\frac12 \left\langle Q_\infty^{-1}x ,x  \right\rangle}, \qquad\text{$x\in\R^n$.}
\end{equation*}
 
\bigskip

\section{Some auxiliary results}\label{s:Some auxiliary results}
In this section we collect some preliminary bounds, which will be essential for the sequel.
\begin{lemma}\label{expsB-bounded}
For $s>0$ and for all $x\in \R^n$
the matrices $ D_{s }$ and $ D_{-s }= D_{s }^{-1}$ satisfy
\begin{equation*}
  e^{cs}|x| \lesssim |D_s\, x|  \lesssim   e^{Cs} |x|,
\end{equation*}
and \begin{equation*}
  e^{-Cs}|x| \lesssim |D_{-s}\, x|  \lesssim   e^{-cs} |x|.
\end{equation*}
This also holds with  $ D_{s }$ replaced by  $e^{-sB}$ and $e^{-sB^*}$.
 \end{lemma}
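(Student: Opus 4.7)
The plan is to reduce everything to standard spectral bounds for matrix exponentials, via Lemma~\ref{lemma:expr_D}(i), and then absorb the constants coming from $Q_\infty$.

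First, I would note that the estimates for $e^{-sB}$ and $e^{-sB^*}$ are the substantive content. Let $-\mu_+$ and $-\mu_-$ be the maximum and minimum of the real parts of the eigenvalues of $B$ (which coincide with those of $B^*$); by hypothesis $\mu_+\ge \mu_->0$. A standard application of the Jordan decomposition (or, equivalently, the spectral bound on the norm of a matrix semigroup) gives, for any fixed sufficiently small $\varepsilon>0$ and any $s\ge 0$,
\begin{equation*}
\|e^{sB}\|\le C_\varepsilon e^{(-\mu_- +\varepsilon)s},\qquad \|e^{-sB}\|\le C_\varepsilon e^{(\mu_+ +\varepsilon)s},
\end{equation*}
and the same with $B$ replaced by $B^*$. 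This yields directly the upper bound $|e^{-sB}x|\lesssim e^{Cs}|x|$ for large $C$. For the matching lower bound I would write $|x|=|e^{sB}e^{-sB}x|\le \|e^{sB}\|\,|e^{-sB}x|$, so that
\begin{equation*}
|e^{-sB}x|\ge \|e^{sB}\|^{-1}|x|\gtrsim e^{(\mu_--\varepsilon)s}|x|\gtrsim e^{cs}|x|
\end{equation*}
for a sufficiently small $c>0$. The analogous argument works verbatim for $e^{-sB^*}$, and, by symmetry of the two-sided bound, for $e^{sB}$ and $e^{sB^*}$ with $s>0$, giving the announced two-sided bound also for $e^{-sB}$ replaced by $e^{-sB^*}$.

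Next, I would transfer the estimates to $D_s$ via Lemma~\ref{lemma:expr_D}(i), which states $D_s=Q_\infty e^{-sB^*}Q_\infty^{-1}$. Since $Q_\infty$ is symmetric and positive definite, both $\|Q_\infty\|$ and $\|Q_\infty^{-1}\|$ are finite positive constants, and consequently $|Q_\infty y|\simeq |y|$ and $|Q_\infty^{-1}y|\simeq |y|$ uniformly in $y$. Therefore
\begin{equation*}
|D_s x|=|Q_\infty e^{-sB^*}Q_\infty^{-1}x|\simeq |e^{-sB^*}Q_\infty^{-1}x|\simeq \text{(bounds for $e^{-sB^*}$ applied to $Q_\infty^{-1}x$)},
\end{equation*}
which by the previous step is $\simeq e^{\Theta(s)}|x|$ in the required two-sided sense. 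For $D_{-s}$ with $s>0$, the same identity reads $D_{-s}=Q_\infty e^{sB^*}Q_\infty^{-1}$ and $sB^*$ has eigenvalues with negative real parts, so the exact same argument gives $|D_{-s}x|\simeq |e^{sB^*}Q_\infty^{-1}x|\lesssim e^{-cs}|x|$, together with the matching lower bound $|D_{-s}x|\gtrsim e^{-Cs}|x|$ obtained either by the same inversion trick $|x|=|D_sD_{-s}x|\le \|D_s\|\,|D_{-s}x|$ or by repeating the Jordan argument.

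There is no real obstacle here, since the lemma is essentially a packaging of the elementary bound $\|e^{sA}\|\le C_\varepsilon e^{(\sigma_{\max}(A)+\varepsilon)s}$ together with its inversion. The only point requiring a little care is to verify that the polynomial prefactors arising from possible non-semisimple Jordan blocks of $B$ are harmless: they are, because they can be absorbed into $e^{\varepsilon s}$ after suitable shrinking of $c$ and enlarging of $C$, and because constants are allowed to depend on $n$, $Q$ and $B$.
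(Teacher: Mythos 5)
Your proposal is correct and takes essentially the same approach as the paper: a Jordan decomposition gives the exponential upper bounds on $\|e^{\pm sB^*}\|$, the identity $D_s=Q_\infty e^{-sB^*}Q_\infty^{-1}$ from Lemma~\ref{lemma:expr_D}(i) transfers these to $D_{\pm s}$ at the cost of constants depending on $Q_\infty$, and the inversion trick $|x|=|D_{-s}D_s x|\le\|D_{-s}\|\,|D_s x|$ supplies the matching lower bounds. The only cosmetic difference is that you spell out the spectral gap $(\mu_-,\mu_+)$ and the $\varepsilon$-absorption of polynomial Jordan factors explicitly, while the paper leaves that implicit.
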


 \begin{proof}
   We make a Jordan decomposition of  $B^*$, thus writing it as the
sum  of a complex diagonal matrix and a triangular, nilpotent matrix, which
commute with each other.   This leads to  expressions for  $e^{-sB^*}$
and  $e^{sB^*}$,
and since  $B^*$ like  $B$ has only eigenvalues with negative real parts,
we see that 
\begin{equation}\label{est:2-eBs}
\|e^{-sB^*} \| \lesssim e^{Cs} \qquad \text{ and } \qquad
\|e^{sB^*} \| \lesssim e^{-cs}.  
\end{equation}

From {\rm{(i)}} in Lemma \ref{lemma:expr_D}, we now get the claimed upper estimates for $D_{\pm s}$.
To prove the lower estimate for $D_s$, we write
\begin{equation*}
  |x| =  |D_{-s}\, D_s\, x| \lesssim e^{-cs} |D_s\, x|. 
\end{equation*}
The other parts of the lemma are completely analogous.
 \end{proof}

In the following lemma, we collect  estimates 
of some basic quantities related to the matrices  $Q_t$. 
\begin{lemma}\label{stimadet}
For all $t>0$ we have
\begin{enumerate}[label=(\roman*)]
\item[{\rm{(i)}}]
$\det{ \, Q_t}
\simeq
(\min(1,t))^{n}$;
\item[{\rm{(ii)}}]
$\| 
Q_t^{-1}\|\simeq (\min (1,t))^{-1}$;
\item[{\rm{(iii)}}]
 $\|
Q_\infty-Q_t
\|
\lesssim e^{-ct}$;
\item[{\rm{(iv)}}]
$\|
Q_t^{-1}-Q_\infty^{-1}
\|\lesssim {t}^{-1}\,{e^{-ct}}$;
\item[{\rm{(v)}}]
$\|\left(
Q_t^{-1}-Q_\infty^{-1}
\right)^{-1/2}\|\lesssim {t^{1/2}}\, e^{Ct}
$.
\end{enumerate}
 \end{lemma}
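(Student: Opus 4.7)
The plan is to derive all five estimates from the integral representations of $Q_t$ and $Q_\infty - Q_t$, together with the exponential decay $\|e^{sB}\|, \|e^{sB^*}\| \lesssim e^{-cs}$ for $s\ge 0$, which follows from a Jordan decomposition of $B^*$ exactly as in the proof of Lemma~\ref{expsB-bounded}. First I would establish {\rm{(iii)}}: plugging $\|e^{sB} Q e^{sB^*}\| \lesssim e^{-2cs}$ into \eqref{eq:diff_q_t} and integrating gives the claimed decay. This bound in turn settles the $t\ge 1$ parts of {\rm{(i)}} and {\rm{(ii)}}, since $Q_t$ then converges exponentially fast to the strictly positive definite $Q_\infty$, whence $\det Q_t \simeq 1$ and $\|Q_t^{-1}\| \simeq 1$. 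For $0 < t \le 1$ I would use \eqref{defQt} directly: because $\|e^{\pm sB^*}\| \lesssim 1$ uniformly on $[0,1]$, positive definiteness of $Q$ gives $\langle Q e^{sB^*} x, e^{sB^*} x\rangle \simeq |x|^2$, and integration yields $\langle Q_t x, x\rangle \simeq t|x|^2$. This provides the small-$t$ cases of {\rm{(ii)}} and {\rm{(i)}} at once.

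For {\rm{(iv)}} I combine \eqref{diff-oper-inv} with {\rm{(ii)}}, {\rm{(iii)}}, and the trivial $\|Q_\infty^{-1}\| \lesssim 1$. For $t \le 1$ this yields $\lesssim t^{-1}$, comparable to $t^{-1} e^{-ct}$; for $t \ge 1$ the bound $\|Q_t^{-1}\| \lesssim 1$ leaves only a factor $e^{-ct}$, in which the polynomial $t^{-1}$ is readily absorbed by slightly shrinking the exponential constant.

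The only step requiring a genuine algebraic trick is {\rm{(v)}}, because a norm bound on $Q_t^{-1} - Q_\infty^{-1}$ does not control its smallest eigenvalue. Here I would invert the matrix explicitly: the general identity $(A^{-1} - B^{-1})^{-1} = B(B-A)^{-1} A$, applied with $A = Q_t$ and $B = Q_\infty$, together with \eqref{Metafune}, yields
\begin{equation*}
(Q_t^{-1} - Q_\infty^{-1})^{-1} = Q_\infty \, e^{-tB^*} \, Q_\infty^{-1} \, e^{-tB} \, Q_t.
\end{equation*}
Since the left-hand side is symmetric and positive definite, $\|(Q_t^{-1} - Q_\infty^{-1})^{-1/2}\|^2$ equals the operator norm of the right-hand side, and it only remains to bound each factor. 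The matrices $Q_\infty$ and $Q_\infty^{-1}$ are bounded, $\|e^{-tB}\|$ and $\|e^{-tB^*}\|$ are at most $e^{Ct}$ by Lemma~\ref{expsB-bounded}, and $\|Q_t\| \lesssim t$ in both regimes (for $t \le 1$ as in the argument for {\rm{(ii)}}, and for $t \ge 1$ since $\|Q_t\| \le \|Q_\infty\| \lesssim 1 \le t$). Multiplying these gives $\lesssim t\, e^{Ct}$, which is precisely {\rm{(v)}}. This last item is the main obstacle: everywhere else the estimates are mechanical consequences of the integral representations and the exponential bounds on $e^{sB}$, but {\rm{(v)}} depends on recognizing that \eqref{Metafune} provides an explicit inverse for $Q_\infty - Q_t$ as a product of $Q_\infty^{-1}$ with exponentials.
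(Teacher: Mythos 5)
Your proposal is correct and follows essentially the same route as the paper: the decay bounds on $e^{\pm sB}$, the integral representation of $Q_t$ for {\rm{(i)}}--{\rm{(ii)}}, the factorization \eqref{diff-oper-inv} combined with {\rm{(ii)}}--{\rm{(iii)}} for {\rm{(iv)}}, and the explicit inverse $(Q_\infty-Q_t)^{-1}=e^{-tB^*}Q_\infty^{-1}e^{-tB}$ coming from \eqref{Metafune} for {\rm{(v)}}. The only cosmetic difference is in {\rm{(i)}}--{\rm{(ii)}}: you split into the regimes $t\le 1$ and $t\ge 1$, using {\rm{(iii)}} and continuity of $\det$ for large $t$, whereas the paper bounds the single integral $\int_0^t |e^{sB^*}v|^2\,ds\simeq\min(1,t)|v|^2$ in one stroke; both are sound, and your explicit remark on absorbing the factor $t^{-1}$ into the exponential in {\rm{(iv)}} for $t\ge1$ fills in a step the paper leaves implicit.
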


\begin{proof} 
{\rm{(i)}}
 and {\rm{(ii)}} Using \eqref{est:2-eBs}, we see  that for each $t>0$
 and for all $v\in\R^n$
\begin{align*}
\langle 
Q_{t} v ,v\rangle
&=
\left\langle \int_0^t 
e^{sB} Q e^{sB^*}v ds,v\right\rangle
=\int_0^t 
\langle 
 Q^{1/2} e^{sB^*}v ,
 Q^{1/2} e^{sB^*} v\rangle ds\notag
 \\
 &=
 \int_0^t 
\big|
 Q^{1/2} e^{sB^*}v 
\big|^2
 ds\simeq
 \int_0^t 
\big|
 e^{sB^*}v 
\big|^2
 ds\notag
\\
&\lesssim   \int_0^t
 e^{-cs} ds \,|v|^2
 \simeq \min(1,t)\, |v|^2.
 \end{align*}
\\
Since $\| \left(e^{s B^*}\right)^{-1}\| = \| e^{-s B^*}\| \lesssim e^{Cs}$,
there is also a lower estimate
\begin{align*}
 \int_0^t 
&\big|
 e^{sB^*}v 
\big|^2
 ds
\gtrsim
 \int_0^t
 e^{-Cs} ds
\, |v|^2
 \simeq \min(1,t) |v|^2.
 \end{align*}
Thus any  eigenvalue of $Q_t$ 
has order of magnitude 
$\min(1,t)$, and  {\rm{(i)}} and {\rm{(ii)}} follow.
\\
{\rm{(iii)}}
From the definition of $Q_t$ and \eqref{est:2-eBs}, we get
$$\|
Q_\infty
-Q_t
\|
=\left\|
\int_t^\infty e^{sB}Qe^{sB^*}ds\right\|\lesssim
e^{-ct}.$$
\\
{\rm{(iv)}}
Using now {\rm{(ii)}} and  {\rm{(iii)}},  we have
\begin{align*}
\|
Q_t^{-1}-Q_\infty^{-1}\|
&=\|Q_t^{-1}(Q_\infty-Q_t)
Q_\infty^{-1}
\|
\lesssim
\|Q_t^{-1}\|\, \|Q_\infty-Q_t\|
\\
&\lesssim
 (\min (1,t))^{-1}\, e^{-ct} \lesssim t^{-1}\, e^{-ct}.
\end{align*}
{\rm{(v)}}
Since $\|A^{1/2}\| = \|A\|^{1/2}$ for any symmetric positive definite matrix 
$A$, we consider $(Q_t^{-1}-Q_\infty^{-1})^{-1}$, which can be rewritten as
\begin{align} \label{difference}
(Q_t^{-1}-Q_\infty^{-1})^{-1}=
(Q_\infty^{-1}(Q_\infty-Q_t)
Q_t^{-1})^{-1}
=Q_t(Q_\infty-Q_t)^{-1}Q_\infty.
\end{align}
It follows from \eqref{Metafune}
that $(Q_\infty- Q_t)^{-1}=
e^{-tB^*} Q_\infty^{-1}e^{-tB},$
so that
\begin{equation*}\|(Q_\infty- Q_t)^{-1}\|\lesssim
e^{Ct},
\end{equation*}
as a consequence of (3.2).
Inserting this and the simple estimate  $\|Q_t \| \lesssim t$
in \eqref{difference}, we obtain 
$\|(Q_t^{-1}-Q_\infty^{-1})^{-1}\| \lesssim t e^{Ct}$, and {\rm{(v)}}
 follows.
\end{proof}

\begin{proposition}\label{stima-prep-loc-t-large}
For $t\ge 1$ and $w\in\R^n$,  we have
\begin{align*}
\langle (Q_t^{-1}- Q_\infty^{-1}
)
D_t 
w,\, D_t 
w
\rangle&	\simeq |w|^2.
  \end{align*}
\end{proposition}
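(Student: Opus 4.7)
The plan is to reduce the quadratic form to a sum of two pieces via the two formulas for $D_t$ in Lemma \ref{lemma:expr_D}, exploiting that one piece simplifies to something $t$-independent.

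First I would use Lemma \ref{lemma:expr_D}(i), the identity \eqref{diff-oper-inv} for $Q_t^{-1}-Q_\infty^{-1}$, and the key relation \eqref{Metafune} $Q_\infty - Q_t = e^{tB}Q_\infty e^{tB^*}$ to compute the ``mixed'' product
\begin{equation*}
(Q_t^{-1}-Q_\infty^{-1})\,D_t \;=\; Q_t^{-1}(Q_\infty - Q_t) Q_\infty^{-1} \cdot Q_\infty e^{-tB^*} Q_\infty^{-1}
\;=\; Q_t^{-1} e^{tB}\,,
\end{equation*}
where the cancellation $e^{tB^*} e^{-tB^*}=I$ is the crucial step.

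Next I would split $D_t w$ on the right using Lemma \ref{lemma:expr_D}(ii), writing $D_t w = e^{tB} w + Q_t e^{-tB^*} Q_\infty^{-1} w$. Plugging this in gives
\begin{equation*}
\langle (Q_t^{-1}-Q_\infty^{-1}) D_t w,\, D_t w\rangle
\;=\;
\langle Q_t^{-1} e^{tB} w,\, e^{tB} w\rangle
\;+\;
\langle Q_t^{-1} e^{tB} w,\, Q_t e^{-tB^*} Q_\infty^{-1} w\rangle.
\end{equation*}
The second inner product collapses: $Q_t^{-1}Q_t = I$, and transposing gives $\langle e^{-tB} e^{tB} w, Q_\infty^{-1} w\rangle = \langle Q_\infty^{-1} w, w\rangle$, which is $t$-independent and $\simeq |w|^2$ because $Q_\infty$ is positive definite.

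The remaining term $\langle Q_t^{-1} e^{tB} w, e^{tB} w\rangle$ is manifestly nonnegative, which immediately gives the lower bound $\gtrsim |w|^2$. For the upper bound I would bound it by $\|Q_t^{-1}\|\,|e^{tB}w|^2$; for $t\ge 1$, Lemma \ref{stimadet}(ii) gives $\|Q_t^{-1}\|\simeq 1$, and Lemma \ref{expsB-bounded} (applied to $e^{tB}$) yields $|e^{tB}w| \lesssim e^{-ct}|w|$, so this term is $\lesssim e^{-2ct}|w|^2 \lesssim |w|^2$. Adding the two estimates closes the upper bound.

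I do not expect real difficulty here: the only nonroutine ingredient is spotting the identity $(Q_t^{-1}-Q_\infty^{-1})D_t = Q_t^{-1}e^{tB}$, after which both directions follow from the already-established bounds in Lemma \ref{stimadet} and Lemma \ref{expsB-bounded}. The restriction $t\ge 1$ enters only through $\|Q_t^{-1}\|\simeq 1$; for $t<1$ the first term would blow up like $t^{-1}$, explaining why the statement is phrased for $t\ge 1$.
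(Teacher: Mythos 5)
Your proof is correct and essentially matches the paper's: both exploit the identity $(Q_t^{-1}-Q_\infty^{-1})D_t = Q_t^{-1}e^{tB}$ (which you re-derive but which is immediate from \eqref{def:Dtx}) and reduce the quadratic form to the same decomposition $\langle Q_\infty^{-1}w,w\rangle + \langle Q_t^{-1}e^{tB}w, e^{tB}w\rangle$, then bound the two pieces identically. The only cosmetic difference is that you apply Lemma~\ref{lemma:expr_D}(ii) to the right-hand $D_t w$, which produces this split in one step, whereas the paper applies (i) and then writes $Q_\infty Q_t^{-1}=I+(Q_\infty-Q_t)Q_t^{-1}$ before simplifying — a slightly longer path to the same two terms.
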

\begin{proof}
By \eqref{def:Dtx} and   Lemma \ref{lemma:expr_D} {\rm{(i)}}
 we have
\begin{align*}
\langle (Q_t^{-1}- Q_\infty^{-1}
)
D_t 
w,\, D_t 
w
\rangle
=&
\langle
Q_t^{-1} 
e^{tB}
w\,,\, 
  Q_\infty
 e^{-tB^*} Q_\infty^{-1} \,w
 \rangle \\
 =&
\langle
 Q_\infty Q_t^{-1} 
e^{tB}
w\,,\, 
 e^{-tB^*} Q_\infty^{-1} \,w
 \rangle.
\end{align*}
 Since
 $Q_\infty Q_t^{-1}=I+(Q_\infty -Q_t)Q_t^{-1}$,
this leads to 
\begin{multline*}
\langle (Q_t^{-1}- Q_\infty^{-1}
)
D_t 
w,\, D_t 
w
\rangle 
\notag \\
 =
\langle
e^{tB}
w\,,\, 
 e^{-tB^*} Q_\infty^{-1} \,w
 \rangle
+
\langle 
( Q_\infty- Q_t)  Q_t^{-1}
e^{tB}
w\,,\, 
 e^{-tB^*} Q_\infty^{-1} \,w
 \rangle
\notag \\
 = \langle Q_\infty^{-1} w,   w\rangle
+
\langle 
e^{-tB} 
( Q_\infty- Q_t)  Q_t^{-1}
e^{tB}
w\,,\, 
 Q_\infty^{-1} \,w
 \rangle.
 \end{multline*}
 Here
 $ \langle Q_\infty^{-1} w,   w\rangle \simeq |w|^2$.
Using  \eqref{eq:diff_q_t} and then the definition  of $ Q_\infty$, 
we observe that the last term  can be written as
\begin{align}
&
\left\langle
 \int_t^\infty 
 e^{(s-t)B}Qe^{(s-t)B^*}
\,ds\:e^{tB^*} \,  Q_t^{-1}
e^{tB}
w \,,\, 
 Q_\infty^{-1} \,w
\right\rangle
  \notag \\
&=
\big\langle
 Q_\infty
 \,e^{tB^*}   \,  Q_t^{-1}
e^{tB}
w\,  \,,\, 
 Q_\infty^{-1} \,w
 \big\rangle
   \notag \\
&=
\langle
 \,e^{tB^*}   \,  Q_t^{-1}
e^{tB}
w\,  \,,\, 
\,w
 \rangle
   \notag \\
   &=
 \,   \,  \big|Q_t^{-1/2}
e^{tB}
w
\big|^2.
\notag
\end{align}
Since 
$\big|Q_t^{-1/2}
e^{tB}
w
\big|^2
\lesssim
|w|^2
$ for $t\ge 1$ by Lemmata 
\ref{expsB-bounded}
and \ref{stimadet}\,{\rm{(ii)}},
 the proposition follows.\end{proof}

We finally give estimates of the kernel $K_t$, for small and large values of
$t$.
When $t\le 1$, one has 
$\|(Q_t^{-1}-Q_\infty^{-1})^{1/2}\| \simeq t^{-1/2} $  and  
$\|(Q_t^{-1}-Q_\infty^{-1})^{-1/2}\| \simeq t^{1/2}$, by (iv) and (v) in 
Lemma \ref{stimadet}.
Combined with \eqref{defKR},  this implies
\begin{equation}\label{litet}
   \frac{ e^{R( x)}}{t^{n/2}}\exp\big(-C\,\frac{|u-D_t \,x |^2}t\big)
 \lesssim   K_t(x,u)
\lesssim  \frac{ e^{R( x)}}{t^{n/2}} \exp\big(-c\,\frac{|u-D_t\, x |^2}t\big),
\quad \; 0 < t\le 1.
\end{equation}

  \begin{lemma}\label{w}
For $t\geq 1$ and
$x,u\in\R^n$,  we have
\begin{align}\label{stort} 
e^{R(x)}
\exp
\Big[
-C
\big| D_{-t}\,u- x
\big|^2
\Big]
\lesssim
K_t (x,u)
&\lesssim 
e^{R(x)}
\exp
\Big[
-
c\big|
D_{-t}\,u- x\big|^2
\Big].         
\end{align}
\end{lemma}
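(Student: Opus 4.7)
The plan is to read the estimate directly off the explicit formula \eqref{defKR} for $K_t$ using the auxiliary results already in hand, with Proposition \ref{stima-prep-loc-t-large} as the engine.

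First, I would dispose of the prefactor. Since $t \geq 1$, Lemma \ref{stimadet}{\rm (i)} gives $\det Q_t \simeq 1$, and hence $(\det Q_\infty / \det Q_t)^{1/2} \simeq 1$. This constant can be absorbed into the multiplicative constants hidden by $\lesssim$ and $\gtrsim$. The factor $\exp(R(x))$ already appears on both sides of \eqref{stort}, so nothing is needed there.

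The main step is to analyse the quadratic form in the exponent of \eqref{defKR}. I would introduce the substitution
\[
w := D_{-t}u - x,
\]
and use the group property of $\{D_t\}_{t\in\R}$ (recorded right after Lemma \ref{lemma:expr_D}) to rewrite $u - D_t x = D_t w$. Then
\[
\big\langle (Q_t^{-1}-Q_\infty^{-1})(u-D_t x),\, u-D_t x\big\rangle = \big\langle (Q_t^{-1}-Q_\infty^{-1}) D_t w,\, D_t w\big\rangle,
\]
and Proposition \ref{stima-prep-loc-t-large} delivers the two-sided bound
\[
\big\langle (Q_t^{-1}-Q_\infty^{-1}) D_t w,\, D_t w\big\rangle \simeq |w|^2 = |D_{-t}u - x|^2,
\]
valid for $t\geq 1$. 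Inserting this into the exponential in \eqref{defKR} produces both bounds in \eqref{stort} simultaneously.

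There is no real obstacle here; the lemma is essentially a clean translation of Proposition \ref{stima-prep-loc-t-large} combined with the determinant estimate of Lemma \ref{stimadet}{\rm (i)}. The only thing to be careful about is the change of variables based on the group law $D_{-t}(u - D_t x) = D_{-t}u - x$, which ensures that the quadratic form $\langle (Q_t^{-1}-Q_\infty^{-1})\cdot,\cdot\rangle$ evaluated at $u - D_t x$ is exactly the quantity controlled by Proposition \ref{stima-prep-loc-t-large}.
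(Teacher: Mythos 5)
Your proof is correct and follows essentially the same route as the paper's: write $u - D_t x = D_t(D_{-t}u - x)$ and apply Proposition \ref{stima-prep-loc-t-large} with $w = D_{-t}u - x$. The only addition is that you spell out explicitly why the determinant prefactor is harmless for $t\geq 1$ (via Lemma \ref{stimadet}(i)), which the paper leaves implicit.
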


\begin{proof}
This follows from \eqref{defKR}, if we write
$u-D_t x=
D_t (D_{-t}\,u- x)$
and apply Proposition \ref{stima-prep-loc-t-large}
with $w=D_{-t}\,u- x$.
\end{proof}

\bigskip

\section{Geometric aspects of the problem}\label{s:Geometric aspects of the problem}

\subsection{A  system of adapted polar  coordinates.}\label{coord-choice}

We first  need a technical lemma.
\begin{lemma}\label{elenco-formule}
For all $x$
in $\R^n$ and $s\in\R$, we have
\begin{align}
&\langle  B^* Q_\infty^{-1} x, 
x\rangle =
-
\frac12\, |Q^{1/2}\, Q_\infty^{-1} x|^2;
\label{vel-1}
\\
&\frac{\partial}{\partial s}
 D_sx 
 =
- Q_\infty
 B^*\,Q_\infty^{-1} 
D_s x = -Q_\infty e^{-sB^*}B^*Q_\infty^{-1} x;
 \label{vel-3}
\\
&\frac{\partial}{\partial s}
R\big( D_sx \big)
 =\frac12\,
\big|
Q^{1/2}   Q_\infty^{-1}   D_s x\big|^2
\simeq
\big| 
D_s x\big|^2 .
 \label{vel-4}
\end{align}

\end{lemma}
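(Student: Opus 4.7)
The plan is to reduce everything to the Lyapunov-type identity
\begin{equation*}
B Q_\infty + Q_\infty B^* = -Q,
\end{equation*}
which follows immediately on differentiating \eqref{Metafune} at $t=0$ (equivalently, on differentiating $s\mapsto e^{sB} Q e^{sB^*}$ and integrating from $0$ to $\infty$, using that $e^{sB}\to 0$ as $s\to\infty$). All three assertions are then short algebraic consequences of this identity together with the explicit formula $D_s = Q_\infty e^{-sB^*}Q_\infty^{-1}$ recorded in Lemma~\ref{lemma:expr_D}(i).

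For \eqref{vel-1} I would apply the Lyapunov identity in the form $Q = -B Q_\infty - Q_\infty B^*$ to the vector $Q_\infty^{-1} x$, obtaining
\begin{equation*}
|Q^{1/2} Q_\infty^{-1} x|^2 = \langle Q Q_\infty^{-1} x,\, Q_\infty^{-1} x\rangle = -\langle B x,\, Q_\infty^{-1} x\rangle - \langle Q_\infty B^* Q_\infty^{-1} x,\, Q_\infty^{-1} x\rangle.
\end{equation*}
The first summand equals $-\langle B^* Q_\infty^{-1} x, x\rangle$ by the definition of the adjoint; the second equals the same quantity after moving the symmetric $Q_\infty$ onto the right entry to cancel $Q_\infty^{-1}$. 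Summing gives $-2\langle B^* Q_\infty^{-1} x, x\rangle$, which is \eqref{vel-1}.

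For \eqref{vel-3} I would simply differentiate $D_s = Q_\infty e^{-sB^*} Q_\infty^{-1}$ in $s$; since $B^*$ commutes with $e^{-sB^*}$, the derivative equals $-Q_\infty e^{-sB^*} B^* Q_\infty^{-1} x$, which is already the right-hand expression, and after inserting $Q_\infty^{-1} Q_\infty$ it also equals $-Q_\infty B^* Q_\infty^{-1} D_s x$. Finally, \eqref{vel-4} follows by the chain rule: since $R(y) = \tfrac12 \langle Q_\infty^{-1} y, y\rangle$ and $Q_\infty^{-1}$ is symmetric,
\begin{equation*}
\frac{\partial}{\partial s} R(D_s x) = \langle Q_\infty^{-1} D_s x,\, \tfrac{\partial}{\partial s} D_s x\rangle = -\langle B^* Q_\infty^{-1} D_s x,\, D_s x\rangle
\end{equation*}
by \eqref{vel-3}, and invoking \eqref{vel-1} with $D_s x$ in place of $x$ produces the claimed first equality. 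The comparability $|Q^{1/2} Q_\infty^{-1} D_s x|^2 \simeq |D_s x|^2$ is then immediate, because $Q^{1/2} Q_\infty^{-1}$ is a fixed invertible matrix depending only on $Q$ and $B$. No step poses a real obstacle; the only place where mild care is needed is in the adjoint manipulations for \eqref{vel-1} and in correctly substituting $D_s x$ for $x$ when reusing it inside the derivation of \eqref{vel-4}.
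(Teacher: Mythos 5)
Your proof is correct. The only real departure from the paper is in \eqref{vel-1}: you first isolate the Lyapunov matrix equation $BQ_\infty + Q_\infty B^* = -Q$ (which indeed follows by differentiating \eqref{Metafune} at $t=0$, or by integrating $\frac{d}{ds}\bigl(e^{sB}Qe^{sB^*}\bigr)$ over $(0,\infty)$ using $e^{sB}\to 0$) and then test it against $Q_\infty^{-1}x$, whereas the paper proves \eqref{vel-1} directly by substituting the integral defining $Q_\infty$, moving $e^{sB}$ across the inner product, and recognizing the integrand as $\tfrac12\frac{d}{ds}\langle e^{sB^*}z, Qe^{sB^*}z\rangle$. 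These are the same fundamental-theorem-of-calculus computation, carried out once at the matrix level in your version and once at the scalar level in the paper's; yours has the mild advantage of surfacing a standard, reusable identity (the equation for the stationary covariance), while the paper's is marginally more self-contained as a one-off. Your adjoint manipulations are correct: $-\langle Bx,Q_\infty^{-1}x\rangle$ and $-\langle Q_\infty B^*Q_\infty^{-1}x,Q_\infty^{-1}x\rangle$ both reduce to $-\langle B^*Q_\infty^{-1}x,x\rangle$. For \eqref{vel-3} and \eqref{vel-4} you follow the same route as the paper (differentiate $D_s = Q_\infty e^{-sB^*}Q_\infty^{-1}$; then chain rule together with \eqref{vel-1} applied to $D_sx$), and the final comparability in \eqref{vel-4} is justified exactly as you say, since $Q^{1/2}Q_\infty^{-1}$ is a fixed invertible matrix.
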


\begin{proof}
To prove \eqref{vel-1}, we use the definition of  $Q_\infty$ to  write for any $z\in \R^n$
\begin{align*}
\langle B^* z, Q_\infty z\rangle
&=
\int_0^\infty
\langle B^* z,
e^{sB}\,Q\,
e^{sB^*}z\rangle \,ds\\
&=
\int_0^\infty
\langle e^{sB^*}\, B^* z,
\,
Q\, e^{sB^*} z \rangle \,ds\\
&=\frac12\,\int_0^\infty\frac{d}{ds}\langle e^{sB^*}\,  z,\,
Q\, e^{sB^*} z \rangle\, ds\\
&=
-\frac12\, |Q^{1/2}\, z|^2.
\end{align*}
Setting $z=Q_\infty^{-1} x $,
we get \eqref{vel-1}.

Further,
\eqref{vel-3} easily  follows 
if we observe that
\begin{align*}
\frac{\partial}{\partial s}
 D_s x 
=
\frac{\partial}{\partial s}
\left(
 Q_\infty
 e^{-sB^*} Q_\infty^{-1} 
x
\right)
=
- Q_\infty
 B^*\,   Q_\infty^{-1} 
  Q_\infty
 e^{-sB^*}
 Q_\infty^{-1} 
 x
=- Q_\infty
 B^*\,Q_\infty^{-1} 
D_s  x.
\end{align*}

Finally, we get by means of \eqref{vel-3} and  \eqref{vel-1}
\begin{align*}
\frac{\partial}{\partial s}
R\left(D_s x \right)
&=
\frac12
\frac{\partial}{\partial s}
\langle Q_\infty^{-1/2} D_s  x, Q_\infty^{-1/2} D_s  x
\rangle
\\
&=-
\langle Q_\infty^{-1/2} Q_\infty B^*
Q_\infty^{-1} D_s x,  Q_\infty^{-1/2}  D_s x
\rangle
\\
&
=
\frac12
\big|
 Q^{1/2} 
 Q_\infty^{-1}D_s  x\big|^2\,,
 \end{align*}
 and
\eqref{vel-4}
is verified.
\end{proof}
We observe here that an integration 
of \eqref{vel-3}
leads to 
 \begin{equation}\label{xminusDtx}
    |x- D_t \,x| \lesssim t\, |x|,
  \qquad
  0\le t\le 1.
  \end{equation}

Fix now $\beta>0$ and consider the ellipsoid
\begin{equation*}
\ellipses_\beta
=\{x\in\R^n:\, R(x)=
\beta\}
\,.\end{equation*}
 As a consequence of \eqref{vel-4}, 
the map
$s\mapsto R(D_s z)$ is strictly increasing for each $0 \ne z\in\R^n$.
Hence any $x\in\R^n,\, x\neq 0$, 
can be written uniquely as 
\begin{equation}\label{def-coord}
x=D_s \tilde x 
\,,
\end{equation}
for some $\tilde x\in \ellipses_\beta$ 
and $s\in\R$. We consider $s$ and $\tilde x$
as the polar coordinates of $x$. Our estimates in what follows will be uniform in $\beta$.

\medskip

Next, we shall write Lebesgue measure in terms of these polar coordinates.
 A normal vector to the surface  $E_\beta$
at the point $\tilde x \in E_\beta $ is
 ${\bf{N}} (\tilde  x)=Q_\infty^{-1}\tilde x
 $, and
 the tangent hyperplane at   $\tilde x$ 
is ${\bf{N}}(\tilde x)^\perp$. 
For   $s > 0$ the  tangent hyperplane of the surface  $D_s\ellipses_\beta = \{D_s \tilde x: \tilde x\in \ellipses_\beta\}$
at the point  $D_s\tilde x$ is  
$D_s({\bf{N}}(\tilde x)^\perp)$, and a normal to  $D_s\ellipses_\beta$
at the same point is  
$w = (D_s^{-1})^*
({\bf{N}}(\tilde x))=D_{-s}^*
Q_\infty^{-1}\tilde  x  = Q_\infty^{-1}e^{sB} \tilde  x$.

The scalar product of $w$ and the tangent of the curve $s \mapsto D_s\tilde x$
at the point   $D_s\tilde x$ is, because of  \eqref{vel-3} and \eqref{vel-1},
\begin{align}\label{transv}
 & \left\langle \frac \partial { \partial s} D_s \tilde x, w \right \rangle \\& =
-\langle Q_\infty e^{-sB^*}B^* Q_\infty^{-1}  \tilde x,\, Q_\infty^{-1}e^{sB} \tilde  x\rangle = - \langle B^* Q_\infty^{-1}  \tilde x,  \tilde  x\rangle = 
\frac12\, |Q^{1/2}\, Q_\infty^{-1}  \tilde  x|^2 > 0. \notag
\end{align}

Thus the curve  $s \mapsto D_s\tilde x$ is  transversal to each surface
  $D_s\ellipses_\beta$. Let $dS_s$ denote the area measure of 
  $D_s\ellipses_\beta$. Then Lebesgue measure is given in terms of our polar
coordinates by
\begin{align}\label{def:leb-meas}
dx=
H &(s,
\tilde x)
\, dS_s(D_s \tilde x)\,ds,
\end{align}
where
 \begin{equation*}    
H(s,\tilde x)= 
\left\langle \frac \partial { \partial s} D_s \tilde x, 
\frac w {|w|}\right \rangle =
 \frac{ |Q^{1/2}\, Q_\infty^{-1} \tilde x |^2
}{2\  | Q_\infty^{-1} e^{sB} \tilde x  |}.
\end{equation*}

To see how $dS_s$ varies with $s$, we take a continuous function
$\varphi =\varphi(\tilde x) $ on  $\ellipses_\beta$ and extend it to 
$\Bbb R^n \setminus \{0 \}$ by writing 
$\varphi(D_s \tilde x) =\varphi(\tilde x) $. For any $t>0$ and small 
$\varepsilon >0$, we define the shell
\begin{align*}
  \Omega_{t,\varepsilon} = \{D_s \tilde x: t<s<t+ \varepsilon, \;\tilde x \in
\ellipses_\beta\}.
\end{align*}
Then $ \Omega_{t,\varepsilon}$ is the image under $D_t$ of 
 $ \Omega_{0,\varepsilon}$, and the Jacobian of this map is
$\det D_t = e^{-t\tr B}$. Thus
\begin{align*}
  \int_{\Omega_{t,\varepsilon}} \varphi(x)\,dx =
 e^{-t\tr B}  \int_{\Omega_{0,\varepsilon}} \varphi(D_t x)\,dx,
\end{align*}
which we can rewrite as
\begin{align*}
&  \int_{t<s<t+ \varepsilon} \int_{\tilde x \in \ellipses_\beta}
\varphi(\tilde x)\, H(s,\tilde x)\, dS_s(D_s \tilde x)\,ds \\ =\, &
 e^{-t\tr B}\, 
 \int_{0<s< \varepsilon} \int_{\tilde x \in \ellipses_\beta}
\varphi(\tilde x)\, H(s,\tilde x)\, dS_s(D_s \tilde x)\,ds.
\end{align*}
Now we divide by $\varepsilon$ and let $\varepsilon \to 0$, getting
\begin{align*}
  \int_{ \ellipses_\beta}
\varphi(\tilde x)\, H(t,\tilde x)\, dS_t(D_t \tilde x) =
e^{-t\tr B}\,  \int_{ \ellipses_\beta}
\varphi(\tilde x)\, H(0,\tilde x)\, dS_0( \tilde x).
\end{align*}
Since this holds for any $\varphi$, it follows that
\begin{align*}
  dS_t(D_t \tilde x) = e^{-t\tr B}\,
\frac{H(0,\tilde x)}{H(t,\tilde x)}\, dS_0( \tilde x). 
\end{align*}
Together with \eqref{def:leb-meas},
this  implies the following result.

\begin{proposition} \label{propo:equiv-meas-area}
The Lebesgue measure in $\Bbb R^n$ is given in terms of polar coordinates 
$(t, \tilde x)$ by
\begin{align}\label{def:leb-meas-pulita}
  dx =
e^{-t\tr B}\, \frac{ |Q^{1/2}\, Q_\infty^{-1} \tilde x |^2}
{2\,| Q_\infty^{-1} \tilde x  |}\,
 dS_0( \tilde x)\,dt\,.
\end{align}
\end{proposition}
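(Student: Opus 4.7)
The plan is to disintegrate Lebesgue measure along the polar-like foliation given by the trajectories $s \mapsto D_s\tilde x$ and the level surfaces $D_s \ellipses_\beta$. Since by \eqref{transv} the trajectory is transversal to these surfaces, Fubini's theorem together with the standard formula for the infinitesimal volume swept by a moving hypersurface yields the decomposition $dx = H(s,\tilde x)\,dS_s(D_s\tilde x)\,ds$, where $H(s,\tilde x)$ is the scalar projection of $\partial_s D_s \tilde x$ onto the unit outer normal of $D_s\ellipses_\beta$ at $D_s\tilde x$. I would identify that normal as $w = Q_\infty^{-1} e^{sB}\tilde x$ (this follows because $D_s\ellipses_\beta$ is the image under $D_s$ of the level set $\{R=\beta\}$, whose normal at $\tilde x$ is $Q_\infty^{-1}\tilde x$, and normals transform by the inverse transpose), and use \eqref{vel-3} together with \eqref{vel-1} to compute $H(s,\tilde x) = |Q^{1/2}Q_\infty^{-1}\tilde x|^2/(2|Q_\infty^{-1}e^{sB}\tilde x|)$.

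The remaining task, and the main obstacle, is to relate the surface measure $dS_t$ on $D_t\ellipses_\beta$ to $dS_0$ on $\ellipses_\beta$, since $D_t$ is not an isometry and area forms do not transform by the full volume Jacobian. The device I would use is to test against an arbitrary continuous $\varphi$ on $\ellipses_\beta$, extended by $\varphi(D_s\tilde x)=\varphi(\tilde x)$ to be constant along trajectories, and compare the integral of $\varphi$ over the shell $\Omega_{t,\varepsilon} = \{D_s\tilde x : t < s < t+\varepsilon\}$ with its integral over $\Omega_{0,\varepsilon}$. Because $\Omega_{t,\varepsilon}=D_t(\Omega_{0,\varepsilon})$ and $\det D_t = \det(e^{-tB^*}) = e^{-t\tr B}$ (using Lemma \ref{lemma:expr_D}(i)), the ordinary linear change of variables transfers the awkward surface transformation onto an elementary volume Jacobian. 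Expanding both sides via the decomposition of the first paragraph, dividing by $\varepsilon$, and sending $\varepsilon\to 0$ yields the pointwise identity $dS_t(D_t\tilde x) = e^{-t\tr B}\,(H(0,\tilde x)/H(t,\tilde x))\,dS_0(\tilde x)$.

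Finally, inserting this identity into $dx = H(t,\tilde x)\,dS_t(D_t\tilde x)\,dt$ produces a fortuitous cancellation of the factor $H(t,\tilde x)$, leaving $dx = e^{-t\tr B}\,H(0,\tilde x)\,dS_0(\tilde x)\,dt$. Evaluating $H$ at $s=0$, where $e^{0B}=I$, recovers the prefactor $|Q^{1/2}Q_\infty^{-1}\tilde x|^2/(2|Q_\infty^{-1}\tilde x|)$ of \eqref{def:leb-meas-pulita}. I note that this cancellation is not accidental: it reflects the fact that the exponential growth rate $e^{-t\tr B}$ of infinitesimal volumes under the $D_t$-flow is genuinely the only $t$-dependent factor, while the full $s$-dependence of the surface measure is built into the transversality factor $H(s,\tilde x)$ and is eliminated once the two are recombined.
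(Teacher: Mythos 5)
Your proposal reproduces the paper's own argument essentially verbatim: the same transversality factor $H(s,\tilde x)$ computed via \eqref{vel-3} and \eqref{vel-1}, the same identification of the normal $w=Q_\infty^{-1}e^{sB}\tilde x$, and the same shell device $\Omega_{t,\varepsilon}=D_t(\Omega_{0,\varepsilon})$ with $\det D_t=e^{-t\tr B}$ to relate $dS_t$ to $dS_0$, followed by the cancellation of $H(t,\tilde x)$. The reasoning is correct and matches the paper's proof step for step.
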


\medskip

We also need   estimates of  the distance between two points in terms of the polar  coordinates. The following result is a generalization of Lemma 4.2 in \cite{CCS}, and its proof is analogous.
\begin{lemma}\label{lemma-peter-coord}
Fix $\beta > 0$.
 Let  $x^{(0)},\; x^{(1)}\in \R^n
 \setminus \{ 0\} $ 
and assume $ R(x^{(0)}) > \beta/2$.
Write  $$x^{(0)} = D_{ s^{(0)}}
(\tilde x^{(0)}
)\qquad
 \text{ 
and   }
\qquad x^{(1)} = D_{s^{(1)}}(\tilde x^{(1)}) 
$$ with  $s^{(0)}$, $s^{(1)}\in \R$ and
$\tilde x^{(0)},\; \tilde x^{(1)} \in E_\beta$. 
\\
\begin{enumerate}[label=(\roman*)]
\item[{\rm{(i)}}]
Then
\begin{equation}
  \label{lem1}
  \big|x^{(0)} - x^{(1)}\big| \gtrsim c\,  
  \big|\tilde x^{(0)} - \tilde x^{(1)}\big|
  .\end{equation}
\item[{\rm{(ii)}}]
If also $s^{(1)} \ge 0$, then 
\begin{equation}
  \label{lem2}
  \big|x^{(0)} - x^{(1)}\big| \gtrsim c\,\sqrt\beta\,|s^{(0)} -s^{(1)}|.
\end{equation}
\end{enumerate}
\end{lemma}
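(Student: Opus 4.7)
My plan is to reduce both parts to a sublemma about two points on the reference ellipsoid $E_\beta$ separated by one radial translation $D_\sigma$. A preliminary observation is that the hypothesis $R(x^{(0)})>\beta/2$ forces $s^{(0)}\ge -M$ for some constant $M=M(Q,B)>0$: integrating \eqref{vel-4} and using Lemma~\ref{expsB-bounded} yields $R(D_s\tilde x^{(0)})\lesssim e^{-c|s|}\beta\to 0$ as $s\to -\infty$, so $R(x^{(0)})>\beta/2$ cuts off $s^{(0)}$ from below. In particular $|x^{(0)}|\gtrsim\sqrt\beta$.

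Setting $\nu=\min(s^{(0)},s^{(1)})$ and $\sigma=|s^{(0)}-s^{(1)}|$, the group property of $\{D_s\}$ factorizes
\[
x^{(0)}-x^{(1)}=D_\nu\bigl(p-D_\sigma q\bigr),
\]
where $\{p,q\}=\{\tilde x^{(0)},\tilde x^{(1)}\}$ is assigned so that $q$ corresponds to the larger $s^{(i)}$. When $\nu\ge -M$---which is automatic under hypothesis (ii) because $s^{(1)}\ge 0$---Lemma~\ref{expsB-bounded} gives $|D_\nu y|\gtrsim |y|$, whence $|x^{(0)}-x^{(1)}|\gtrsim|p-D_\sigma q|$. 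For part (i), the residual case $s^{(1)}<-M$ (with $M$ chosen large) is handled directly: $|x^{(1)}|\lesssim e^{-cM}\sqrt\beta$ is negligible next to $|x^{(0)}|\gtrsim\sqrt\beta$ and $|\tilde x^{(0)}-\tilde x^{(1)}|\lesssim\sqrt\beta$, so the triangle inequality yields (i) at once.

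It remains to prove the sublemma: for $p,q\in E_\beta$ and $\sigma\ge 0$,
\[
|p-D_\sigma q|\gtrsim|p-q|\qquad\text{and}\qquad|p-D_\sigma q|\gtrsim\sqrt\beta\,\sigma.
\]
For the second inequality, integrating \eqref{vel-4} along $\tau\mapsto D_\tau q$ and using $|D_\tau q|^2\gtrsim e^{2c\tau}\beta$ (Lemma~\ref{expsB-bounded}) yields $R(D_\sigma q)-\beta\gtrsim\beta\sigma$ for $\sigma\lesssim 1$ and $\gtrsim\beta e^{c\sigma}$ for $\sigma\gtrsim 1$. Since $|\nabla R(y)|\simeq|y|$, the function $R$ is Lipschitz with constant $\lesssim\sqrt\beta$ on the segment $[p,D_\sigma q]$ in the small-$\sigma$ regime, giving $|p-D_\sigma q|\gtrsim\sqrt\beta\sigma$; in the large-$\sigma$ regime the direct size comparison $|D_\sigma q|-|p|\gtrsim(e^{c\sigma}-1)\sqrt\beta\gtrsim\sqrt\beta\sigma$ finishes the argument. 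The first inequality then follows from the triangle inequality together with $|D_\sigma q-q|\lesssim\sqrt\beta\sigma$ (from integrating \eqref{vel-3}) in the small-$\sigma$ regime, and from $|p-D_\sigma q|\gtrsim\sqrt\beta\gtrsim|p-q|$ in the large-$\sigma$ regime.

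The main obstacle is stitching the small-$\sigma$ Lipschitz argument to the large-$\sigma$ size comparison so that all implicit constants depend only on $n,Q,B$ and not on $\beta$ or on the individual parameters $s^{(0)},s^{(1)}$. This $\beta$-uniformity ultimately reflects the homogeneity of $R$ together with the fact that the constants appearing in Lemmata~\ref{expsB-bounded} and~\ref{stimadet} are independent of $\beta$, but each stage of the proof requires careful tracking of how those constants combine.
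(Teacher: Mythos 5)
Your proof is correct, and it takes a genuinely different route from the paper's. The paper proves the lemma variationally: it takes an arbitrary differentiable curve $\Gamma$ from $x^{(0)}$ to $x^{(1)}$ in $\R^n\setminus\{0\}$, writes $\Gamma(\tau)=D_{s(\tau)}\tilde x(\tau)$, uses the transversality identity \eqref{transv} to show
\[
|\Gamma'(\tau)|\gtrsim e^{C\min(s(\tau),0)}\bigl(\sqrt\beta\,|s'(\tau)|+|\tilde x'(\tau)|\bigr),
\]
and then integrates, with a separate case analysis when the curve dips into the region $s(\tau)\le -\tilde s-1$ (in which case the travelled ``radial distance'' alone already beats $\sqrt\beta$). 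You instead bypass curves entirely: the group property gives the exact factorization $x^{(0)}-x^{(1)}=D_\nu(p-D_\sigma q)$, and Lemma~\ref{expsB-bounded} plus the lower bound $s^{(0)}\ge -M$ peels off the $D_\nu$ factor, reducing the lemma to the one-step estimate $|p-D_\sigma q|\gtrsim\max(|p-q|,\sqrt\beta\,\sigma)$ for $p,q\in E_\beta$, $\sigma\ge 0$, which you prove by a Lipschitz bound for $R$ (small $\sigma$) and a norm comparison (large $\sigma$); the residual case $s^{(1)}<-M$ under (i) is dispatched by the triangle inequality. Both arguments use the same building blocks (\eqref{vel-3}, \eqref{vel-4}, Lemma~\ref{expsB-bounded}, and the transversality fact that the $D_s$-flow is uniformly non-tangential to the $E_\beta$-foliation) and both yield $\beta$-uniform constants. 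Your approach is shorter and more algebraic, avoiding the curve-integral bookkeeping; the paper's length-of-curve argument is geometrically heavier but has the virtue of bounding the geodesic distance in $\R^n\setminus\{0\}$, not only the chordal distance, and transfers more readily to situations where one must stay away from a singular set. One small polish: the bound $s^{(0)}\ge -M$ is obtained more directly from $R(y)\simeq|y|^2$ and Lemma~\ref{expsB-bounded} than from integrating \eqref{vel-4}, though your derivation also works.
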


\begin{proof} 
Let $\Gamma: [0,1] \to \R^n \setminus \{0\}$ be a
differentiable curve with  $\Gamma(0) =x^{(0)}$ and  $\Gamma(1) =x^{(1)}$.
It suffices to bound  the length of any such curve from
below by the right-hand sides of \eqref{lem1} and \eqref{lem2}.

For each $\tau \in [0,1]$, we write  
$$\Gamma(\tau) = D_{s{(\tau)}}\,\tilde x{(\tau)} 
,
$$ 
with  $\tilde x(\tau) \in E_\beta$ and  $\tilde x{(i)}= \tilde x^{(i)}$,
 $s{(i)}= s^{(i)}$
for $i=0,1$.
Thus
\begin{eqnarray*}
 \Gamma'(\tau) &=
-s'(\tau) \,\frac{\partial}{\partial s}\,
{D_{s}}_{\big|s=s{(\tau)}}
\,\tilde x{(\tau)} 
+
D_{s{(\tau)}}
\tilde x' (\tau).
 \end{eqnarray*}
 The group property of $D_s$ implies that
 $$\frac{\partial}{\partial s}
{D_{s}}_{\big|s=s{(\tau)}}=
{D_{s(\tau)}}
\frac{\partial}{\partial s}
{D_{s}}_{\big|s=0},$$
and so
\begin{eqnarray}\label{def:gammatau}
 \Gamma'(\tau) &= {D_{s(\tau)}}  v,
 \end{eqnarray}
with
\begin{eqnarray*}
v&=
 -s'(\tau) \,\frac{\partial}{\partial s}\,
{D_{s}}_{\big|s=0}
\,\tilde x{(\tau)} 
+
\tilde x' (\tau).
 \end{eqnarray*}
The vector 
$\tilde x' (\tau)$ is tangent to $E_\beta$
and thus orthogonal to ${\bf{N} }(\tilde x)$. Then \eqref{transv}
(with $s=0$)  implies that the angle between
$\frac{\partial}{\partial s}
{D_{s}}_{\big|s=0} \tilde x (\tau)$
and
$\tilde x'(\tau)$ is larger than some positive constant.
It follows that
\begin{align}\label{ineq:v2}
|v|^2 
\gtrsim
|s'(\tau)|^2\,
\Big|
\frac{\partial}{\partial s}\,
{D_{s}}_{\big|s=0}
\,\tilde x{(\tau)} 
\Big|^2
+
\big|
\tilde x' (\tau)\big|^2
\gtrsim
|s'(\tau)|^2\,
\beta+
\big|
\tilde x' (\tau)\big|^2,
 \end{align}     
where we also used the fact that, by \eqref{vel-3}, 
\begin{equation*}
\Big|
\frac{\partial}{\partial s}\,
{D_{s}}_{\big|s=0}
\,\tilde x{(\tau)} 
\Big|\simeq
|\tilde x(\tau)|
\simeq
\sqrt\beta.
\end{equation*}
Since
 \begin{eqnarray*}
|v|
&= \big|{D_{-s(\tau)}}   \Gamma'(\tau)\big|
\le
 \big\|
{D_{-s(\tau)}}
\big\|\, 
\big|
  \Gamma'(\tau)\big| \lesssim e^{-C\min(s(\tau),0)} \big|  \Gamma'(\tau)\big|
 \end{eqnarray*}
because of Lemma \ref{expsB-bounded}, we obtain from \eqref{ineq:v2}
\begin{align}\label{eq:Peter-X}
\big|
  \Gamma'(\tau)\big|
&\gtrsim
 e^{C \min(s(\tau), 0)}
 \ \big(\sqrt
\beta\,
|s'(\tau)|
+
\big|
\tilde x' (\tau)\big|\big).
 \end{align}

Next, we  derive a lower bound for $ s(0)$; 
 assume first that $s{(0)} < 0$.
The assumption
 $ R(x^{(0)}) > \beta/2$
 implies, together with Lemma \ref{expsB-bounded},
\begin{align*}
   \beta/2 \le
R(D_{s(0)}\, \tilde x^{(0)})
&\lesssim
\big|
D_{ s^{(0)}}\,\tilde x^{(0)}
\big|^2\lesssim
e^{c\, s(0)} \big|
\tilde x^{(0)}
\big|^2
\simeq e^{c\, s(0)}\beta.
 \end{align*}
It follows that
\begin{equation*}
   s{(0)} >  -\tilde s,
 \end{equation*} for some $\tilde s$ with $0 <\tilde s<C$, and this obviously holds also without the assumption $s(0)<0$.
  
Assume now that $ s(\tau) > -\tilde s-1$ 
for all $\tau \in [0,1]$.
Then \eqref{eq:Peter-X}
implies
\begin{equation*}
 \big|
 \Gamma'(\tau)\big| 
 \gtrsim \sqrt\beta\, |s'(\tau)|
 \end{equation*}
and 
\begin{equation*}
 \big|\Gamma'(\tau)\big| \gtrsim  |\tilde x'(\tau)|.
 \end{equation*}
Integrating  these estimates
with respect to $\tau$ in $[0,1]$, we immediately see that one can control  the length of $\Gamma$
from below
 by the right-hand sides of  \eqref{lem1} and \eqref{lem2}. 

If instead   $ s(\tau) \le -\tilde s-1$ for some  $\tau \in [0,1]$,
we can proceed as in the proof of Lemma 4.2 in \cite{CCS}. More precisely, since 
 the image $s([0,1])$ contains the interval 
$[-\tilde s-1, \max (s(0), s(1))]$, 
we can find a closed subinterval  $I$ of $ [0,1] $ whose image $s(I)$
is exactly the interval $[-\tilde s-1, \max (s(0), s(1))]$.
Thus we may use \eqref{eq:Peter-X} to control the  length of $\Gamma$
 by
\begin{equation*}
 \int_0^1 \big|
 \Gamma'(\tau)\big|\,
 d\tau \ge \int_I \big|\Gamma'(\tau)\big|\,
 d\tau \gtrsim 
 \sqrt\beta\, \int_I |s'(\tau)|\,d\tau \ge  \sqrt\beta\: 
\big(\max\, (s(0), s(1))\; + \;\tilde s+1\big).
\end{equation*}
Here
\begin{align*}
\sqrt\beta\: 
\big(\max\, (s(0), s(1))\; + \;\tilde s+1\big)&\gtrsim \sqrt\beta
\gtrsim
\text{diam}\, E_\beta \ge  \big|\tilde x^{(0)} - \tilde x^{(1)}\big|,
\end{align*}
and \eqref{lem1} follows.
Under the additional hypothesis $s(1)\ge 0$ of {\rm{(ii)}},
we have
\[\tilde s\ge
\max\, (-s(0), -s(1))=-\min \,(s(0), s(1)).\]
Then
\begin{align*}
\sqrt\beta\: 
\big(\max\, (s(0), s(1))\; + \;\tilde s+1\big)&\gtrsim
\sqrt\beta\: 
\big(\max\, (s(0), s(1))-\min \,(s(0), s(1))\big)\\
&=
\sqrt\beta\: |s(0)-s(1)|,
\end{align*}
and \eqref{lem2} follows.
\end{proof}

\medskip

\subsection{
The Gaussian measure of a tube}\label{restriction}
We fix a large  $\beta > 0$.
Define for $x^{(1)}\in E_\beta$ and $a>0$ the set
\begin{equation*}
  \Omega = \left\{
  x \in  E_\beta: \left|x - x^{(1)}\right| < a \right\}.
\end{equation*}
This is  a spherical cap of the ellipsoid $E_\beta$, 
centered at $x^{(1)}$.
Observe that $|x| 
\simeq\sqrt \beta $ for  $x \in \Omega$, and that the area of
 $\Omega$ is $|\Omega|\simeq \min\,( a^{n-1},\beta^{(n-1)/2}) $. Then
consider the tube
\begin{equation}   \label{zona}
Z =  \{D_s \tilde x
:s\ge 0,
\;\tilde x \in \Omega \}.
\end{equation}

\begin{lemma}\label{lemma-Peter-forbidden}
There exists a constant $C$ such that $\beta > C$ implies that
the Gaussian
measure 
 of the tube $Z$ fulfills
\begin{equation*}\label{stima-Peter-mu}  
\gamma_\infty (Z)\lesssim
\frac{a^{n-1}}{\sqrt{ \beta}}\, e^{-\beta}.
\end{equation*}
\end{lemma}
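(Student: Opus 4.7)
I would estimate $\gamma_\infty(Z)$ by switching to the polar-like coordinates $(s,\tilde x)$ introduced in Subsection~\ref{coord-choice} and applying Proposition~\ref{propo:equiv-meas-area}. Together with the density of $\gamma_\infty$, this gives
\begin{equation*}
\gamma_\infty(Z) = c_n (\det Q_\infty)^{-1/2}\int_\Omega \int_0^\infty e^{-R(D_s\tilde x)}\, e^{-s\tr B}\, H(0,\tilde x)\, ds\, dS_0(\tilde x),
\end{equation*}
where $H(0,\tilde x) = |Q^{1/2}Q_\infty^{-1}\tilde x|^2 / (2|Q_\infty^{-1}\tilde x|)$. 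Since $\tilde x \in E_\beta$ forces $|\tilde x|\simeq \sqrt\beta$, one immediately sees $H(0,\tilde x) \simeq \sqrt\beta$, and the surface measure of $\Omega$ satisfies $|\Omega| \lesssim a^{n-1}$.

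The whole estimate is then reduced to the inner integral in $s$. Here the idea is that, thanks to \eqref{vel-4} and Lemma~\ref{expsB-bounded}, one has for $s \ge 0$
\begin{equation*}
\frac{\partial}{\partial s} R(D_s\tilde x) \simeq |D_s\tilde x|^2 \gtrsim e^{2cs}|\tilde x|^2 \gtrsim \beta,
\end{equation*}
so that $R(D_s\tilde x) \ge \beta + c_1\beta s$ for all $s\ge 0$ and some constant $c_1>0$ independent of $\beta$. Since $\tr B$ is a fixed real number, we may choose $\beta$ large enough (this is where the hypothesis $\beta > C$ enters) to guarantee $c_1\beta + \tr B \ge c_2\beta$ for some $c_2>0$. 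Consequently
\begin{equation*}
\int_0^\infty e^{-R(D_s\tilde x)}\, e^{-s\tr B}\, ds \le e^{-\beta}\int_0^\infty e^{-c_2\beta s}\, ds \lesssim \frac{e^{-\beta}}{\beta}.
\end{equation*}

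Combining these three ingredients yields
\begin{equation*}
\gamma_\infty(Z) \lesssim \sqrt\beta \cdot a^{n-1} \cdot \frac{e^{-\beta}}{\beta} = \frac{a^{n-1}}{\sqrt\beta}\, e^{-\beta},
\end{equation*}
which is the desired bound. The only mildly delicate point is the competition between the exponentially growing Jacobian factor $e^{-s\tr B}$ and the exponential decay of $e^{-R(D_s\tilde x)}$: the growth rate of $R$ along the orbit $s\mapsto D_s\tilde x$ is proportional to $\beta$, and for large $\beta$ it overwhelms the bounded rate $|\tr B|$, which is exactly why the assumption $\beta > C$ is needed.
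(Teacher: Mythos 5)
Your argument is correct and follows essentially the same route as the paper: pass to the polar coordinates of Proposition~\ref{propo:equiv-meas-area}, estimate $H(0,\tilde x)\simeq\sqrt\beta$ and $|\Omega|\lesssim a^{n-1}$, then use \eqref{vel-4} to get $R(D_s\tilde x)\gtrsim\beta+c\beta s$ so that for $\beta$ large the decay beats the Jacobian factor $e^{-s\tr B}$ and the $s$-integral contributes $\lesssim e^{-\beta}/\beta$. The only (harmless) cosmetic difference is that you invoke Lemma~\ref{expsB-bounded} to bound $|D_s\tilde x|^2\gtrsim e^{2cs}|\tilde x|^2$ from below, whereas the paper simply uses $|D_{s'}\tilde x|^2\gtrsim|\tilde x|^2$ for $s'\ge 0$; both yield the same lower bound $\gtrsim\beta$.
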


\begin{proof}
Proposition \ref{propo:equiv-meas-area}
 yields, since $H(0,\tilde x) \simeq |\tilde x|\simeq \sqrt \beta$,
\begin{align*}   
\gamma_\infty  (Z)  
\simeq
\int_0^\infty 
e^{-s \tr B} \
  e^{-R(D_s \tilde x)}\,
\int_{\Omega}
 {H(0,\tilde x)}
\, dS (\tilde x)\,ds
\lesssim 
\sqrt \beta \,a^{n-1} \int_0^\infty e^{-s \tr B} e^{-R(D_s \tilde x)}\,ds.
\end{align*}
By  \eqref{vel-4} we have
 \begin{align*}
R ({D_s \tilde x})-
R(\tilde x) 
\simeq 
\int_0^s
\big|
D_{s'} \tilde x\big|^2 ds'  \gtrsim
s | \tilde x|^2\simeq s\beta,
\end{align*}
which implies
\begin{align*}
\gamma_\infty  (Z)  
\lesssim    \sqrt{\beta} \ a^{n-1}\, e^{-\beta}
\int_0^\infty 
e^{-s \tr B} \,
  e^{-cs\beta}\,
\ ds
.\end{align*}
Assuming $\beta$ large enough,
one has
$c\beta> -2\tr B$,
and then the last integral is finite and no larger than $C/\beta$.
The lemma follows.
\end{proof}

\bigskip

\section{Some simplifications}\label{simplifications}
In this section, we introduce 
some preliminary simplifications and reductions in the proof of 
 \eqref{thesis-mixed-Di}, i.e., of   Theorem \ref{weaktype1}.
 
  \begin{enumerate}
\item We may assume
that 
$f$ is nonnegative and normalized in the sense that 
 \begin{equation*}
 \|f\|_{L^1( \misgaussk)}=1,
 \end{equation*}
since this involves no loss of generality.
\item We may assume
that  
 $\alpha$ 
is large, $\alpha > C$,  since otherwise  \eqref{thesis-mixed-Di}  and \eqref{Eldan-est}  are trivial.
 \item
 In many cases, we may restrict $x$
  in \eqref{thesis-mixed-Di}  and \eqref{Eldan-est}
   to  the ellipsoidal annulus
\begin{equation}\label{crown}
{\mathcal E_\alpha}=\left\{
x \in\R^n:\, \frac12  \log \alpha\le 
R(x)
\le 2  \log \alpha\,
\right\}.
\end{equation}
To begin with, we can always  forget
the unbounded component of the complement of $\mathcal E_\alpha$, since
\begin{align} \label{restrizione-prima-1}
&\gamma_\infty
\{
x\in
\R^n
:
\,R( x)>
2 \log \alpha
\}\\
\lesssim &
\int_{ R(x)>
 2 \log \alpha }
\exp
(-R(x)  )
dx\, 
\notag
\lesssim
(\log \alpha)^{(n-2)/2}\,\exp (
{-
 2 \log \alpha })
\notag
\lesssim  \frac1\alpha.
 \end{align}
\item
When $t>1$, we may forget also 
the inner region
where $R(x)<\frac12  \log \alpha$.
Indeed, from \eqref{stort}
we get, if  $(x,u)\in \R^n\times\R^n$ with  $R(x) < \frac12 \log \alpha$,
\begin{equation*}
K_t
(x,u) \lesssim  \, e^{R(x)} 
<  \sqrt{\alpha}
<  \alpha,
\end{equation*}
since $\alpha$ is large.
In other words, 
 for any  $(x,u)\in \R^n\times\R^n$ 
\begin{equation}\label{claimv}
 R(x) < \frac12 \log \alpha\; \qquad \Rightarrow \;\qquad 
K_t
(x,u) \lesssim \alpha,
\end{equation}
for all  $t> 1$.\\
Replacing $\alpha$ by $C\alpha$ for some $C$, we see from  \eqref{restrizione-prima-1} 
and
\eqref{claimv}
that
we can assume
 $x
\in {\mathcal E_\alpha}$
 in the 
proof of  \eqref{thesis-mixed-Di} and \eqref{Eldan-est},
when the supremum  of the maximal operator is taken only 
over $t> 1$.
\end{enumerate} 
Before introducing the last simplification,  we need to 
define a global region 
 \begin{align*}
G&=\left\{
(x,u)\in\R^n\times\R^n\,:  \,|x-u|> \frac{1}{1+|x|}
\right\}
\end{align*}
and a local region
 \begin{align*}
L&=\left\{
(x,u)\in\R^n\times\R^n\,:  \,|x-u|\le \frac{1}{1+|x|}
\right\}.
\end{align*}
Notice that the definition of $G$ and $L$ does not depend on $Q$ and $B$.

\setcounter{enumi}{4}
\begin{enumerate}
\item[(5)]
  When $t\le 1$ and $(x,u) \in G$, we shall see that
  \eqref{claimv} is still valid, and it is again enough to 
consider $x\in{\mathcal E_\alpha}$.
\end{enumerate}
 To prove this, we need a lemma which will also be useful later.
\begin{lemma}\label{lemma-sost-peter}
  If $(x,u)\in G$ and $0<t\le 1$, then
\begin{equation*}
\frac1{(1+|x|)^2} 
\lesssim t^2 |x|^2  +
 |
 u-D_{t} x|^2.
\end{equation*}
\end{lemma}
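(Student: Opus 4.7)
The plan is to prove this by a direct triangle inequality argument combined with the bound \eqref{xminusDtx} already established for $|x - D_t x|$ when $0 \le t \le 1$.

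First, I would split the distance $|x - u|$ via the intermediate point $D_t x$:
\begin{equation*}
|x - u| \le |x - D_t x| + |D_t x - u|.
\end{equation*}
From \eqref{xminusDtx}, which gives $|x - D_t x| \lesssim t|x|$ for $0 \le t \le 1$, this becomes
\begin{equation*}
|x - u| \lesssim t|x| + |u - D_t x|.
\end{equation*}

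Next, I would invoke the hypothesis $(x,u) \in G$, which says exactly $|x - u| > 1/(1+|x|)$. Combining with the previous display yields
\begin{equation*}
\frac{1}{1+|x|} < |x - u| \lesssim t|x| + |u - D_t x|.
\end{equation*}
Squaring both sides and using the elementary inequality $(a+b)^2 \le 2a^2 + 2b^2$ gives the claimed estimate
\begin{equation*}
\frac{1}{(1+|x|)^2} \lesssim t^2|x|^2 + |u - D_t x|^2.
\end{equation*}

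There is really no main obstacle here: the bound \eqref{xminusDtx} does all the heavy lifting by converting the displacement $x - D_t x$ into the factor $t|x|$ that appears in the right-hand side, and the remaining work is just the triangle inequality and squaring. The lemma's role in the paper is presumably to let one absorb the lower bound $1/(1+|x|)^2$ coming from the definition of $G$ into the exponential in the short-time kernel estimate \eqref{litet}, either via the $t^2|x|^2$ term (small $t$, possibly large $|x|$) or via the $|u - D_t x|^2/t$ term in the Gaussian.
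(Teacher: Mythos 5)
Your proposal is correct and essentially identical to the paper's proof: both use the triangle inequality through the intermediate point $D_t x$, invoke \eqref{xminusDtx} to bound $|x-D_tx|\lesssim t|x|$, apply the defining inequality of $G$, and then square. The paper simply compresses the final squaring step into ``the lemma follows.''
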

\begin{proof}
  From the definition of $G$ and \eqref{xminusDtx} we get
\begin{align*}
\frac1{1+|x|}& 
\le  |x- u|
\le
 |x-D_t x| +|D_t x -u|
 \lesssim
t | x
|+|u-D_{t} x|.
\end{align*}
The lemma follows.
\end{proof}

To verify now \eqref{claimv} in the global region with
 $t\le 1$,  we recall from \eqref{litet} that
  \begin{align*}
K_t
(x,u) 
 \lesssim   \frac{e^{R(x)}}{
 t^{n/2}} 
\exp
 \Big(-c\,\frac{|
 u-D_tx|^2}t
 \Big)
.
\end{align*}
It follows from Lemma 
\ref{lemma-sost-peter} that 
\begin{align} \label{or}
 t^2 \gtrsim \frac1{(1+|x|)^4} &
 \qquad  \text{or}  \qquad 
\frac{|u-D_{t} x|^2}t 
\gtrsim \frac1{(1+|x|)^2t}.
\end{align}
The first inequality here implies that
\begin{equation*}
K_t
(x,u)
 \lesssim    e^{R(x)}\, (1+|
 x|)^n
 \lesssim   e^{2R(x)},
\end{equation*}
and \eqref{claimv} follows.
If the second inequality of \eqref{or} holds, we have
\begin{equation*}
K_t
(x,u) 
\lesssim  
\frac{e^{R(x)}}{t^{n/2}} 
\exp
 \left(-\frac c{(1+|x|)^2
 t}
 \right)
 \lesssim    e^{R(x)}\, 
 (1+|x|)^n,
\end{equation*}
and we get the same estimate. Thus 
\eqref{claimv} is verified. 

\medskip

Finally, 
let
\begin{align*}
\mathcal H_*^{G}
f(x)=\sup_{0 < t\le 1}
\left|
 \int
K_t
(x,u)\,\chi_{{G}}(x,u)\,f(u)
\,
 d\gamma_\infty(u) 
 \,
\right|\,,
\end{align*}
and
\begin{align*}
\mathcal H_*^{L}
f(x)=\sup_{0 < t\le 1}
\left|
 \int
K_t
(x,u)\,\chi_{{L}}(x,u)\,f(u)
\,
 d\gamma_\infty(u) 
 \,
\right|\,.
\end{align*}

\bigskip

\section{The case of large $t$}\label{s:mixed. t large}
In this section, we consider
 the supremum in the definition of the maximal operator taken only 
over $t> 1$, and we
prove \eqref{Eldan-est}.

\begin{proposition}\label{propo-mixed-glob-enhanced}
For all functions $f\in L^1 (\gamma_\infty)$ such that
$\|f\|_{L^1( \gamma_\infty)}=1$,
\begin{equation} \label{thesis-mixed-Di-tlarge-v}  
\gamma_\infty
\left\{x : \sup_{t> 1}|\mathcal H_t f(x)|
 > \alpha\right\} \lesssim \frac{1}{\alpha\sqrt{\log \alpha}}, \quad 
\quad \text{ $\alpha>2$.}
\end{equation}
In particular,
the maximal operator 
$$
\sup_{t> 1}|\mathcal H_t f(x)|
$$
 is of weak type $(1,1)$ with respect to the invariant measure
 $\misgaussk
$.

\end{proposition}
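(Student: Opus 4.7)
Applying the simplifications of Section~\ref{simplifications}, I may assume $f\ge 0$, $\|f\|_{L^1(\gamma_\infty)}=1$, $\alpha$ is larger than a fixed constant, and $x$ is confined to the ellipsoidal annulus $\mathcal E_\alpha$. Set $\beta=\log\alpha$ and use the polar-like coordinates of Subsection~\ref{coord-choice} based on $E_\beta$, writing $x=D_\sigma\tilde x$ with $\tilde x\in E_\beta$; the condition $x\in\mathcal E_\alpha$ keeps $\sigma$ bounded. It suffices to show that the bad set
\[
E=\bigl\{x\in\mathcal E_\alpha:\sup_{t>1}\mathcal H_tf(x)>\alpha\bigr\}
\]
satisfies $\gamma_\infty(E)\lesssim \frac{1}{\alpha\sqrt{\log\alpha}}$.

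For each $x\in E$ select a witness $t(x)>1$. The upper estimate of Lemma~\ref{w} turns $\mathcal H_{t(x)}f(x)>\alpha$ into
\[
\int f(u)\,e^{-c|D_{-t(x)}u-x|^2}\,d\gamma_\infty(u)\gtrsim \alpha e^{-R(x)}.
\]
Since the left-hand side is trivially bounded by $\|f\|_{L^1(\gamma_\infty)}=1$, one also gets $R(x)\ge\beta-C$, so $E$ lies in the outer part of $\mathcal E_\alpha$ where $s:=R(x)-\beta\in[-C,\beta]$. A layer-cake decomposition of the Gaussian weight then produces, for each $x\in E$, a radius $r(x)\lesssim\sqrt{1+s}$ such that the $f$-mass inside $\{u:|D_{-t(x)}u-x|\le r(x)\}$ exceeds $c\,e^{-s}$.

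Next, Lemma~\ref{lemma-peter-coord} implies that $|D_{-t(x)}u-x|\le r$ forces the polar projection $\tilde u$ of $u$ to lie in a spherical cap of $E_\beta$ of radius $\lesssim r$ centred at $\tilde x$. Hence, at each dyadic level of $s$, the admissible set of directions $\tilde x$ on $E_\beta$ can be covered by caps whose number is controlled via $\|f\|_{L^1(\gamma_\infty)}=1$ by a Markov-type argument. Packaging the corresponding caps into tubes as in Subsection~\ref{restriction} and invoking Lemma~\ref{lemma-Peter-forbidden} furnishes Gaussian-measure bounds in which exactly the factor $\frac{1}{\sqrt\beta}=\frac{1}{\sqrt{\log\alpha}}$ from that lemma encodes the desired improvement; combined with the outer Gaussian density $e^{-\beta}=1/\alpha$ and a careful dyadic summation in $s$, this should yield the sharp $\frac{1}{\alpha\sqrt{\log\alpha}}$ estimate.

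\textbf{Main obstacle.} The delicate step is the dyadic summation in $s$. At level $s$ the mass constraint is only $e^{-s}$, so Markov counting of admissible angular caps grows like $e^s$, while the cap radius swells to $\sqrt s$, the Gaussian density of the shell shrinks like $e^{-\beta-s}$, and the radial thickness $\sim 1/(\beta+s)$ of the shell (from Proposition~\ref{propo:equiv-meas-area}) further modulates the estimate. Arranging these competing factors so that every level contributes no more than $\frac{1}{\alpha\sqrt{\log\alpha}}$ and the series is summable, and in particular treating the transition regime where the Markov bound saturates against the total area $\simeq\beta^{(n-1)/2}$ of $E_\beta$, is the principal technical hurdle and is where the proof must be executed with care.
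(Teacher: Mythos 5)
You have assembled the right tools (the simplifications, the polar coordinates from Subsection~\ref{coord-choice}, the kernel bound of Lemma~\ref{w}, and Lemma~\ref{lemma-peter-coord}(i)), but the strategy you propose for combining them---layer-cake, cap covering, Markov counting, dyadic summation in $s$---is not carried out, and the ``main obstacle'' you flag is a genuine gap rather than a routine technicality. In particular it is not clear how to obtain disjointness or bounded overlap of the caps at different dyadic levels of $s$, which is what a Markov/Vitali-type counting would require, nor how to keep the count summable against the growing shell area; as written this is an incomplete argument, not a proof.

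The paper's proof bypasses all of this with a much simpler mechanism. Starting from the bound
\[
K_t(x,u)\lesssim e^{R(x)}\exp\bigl(-c\,|D_{-t}u-x|^2\bigr)\lesssim e^{R(D_s\tilde x)}\exp\bigl(-c\,|\tilde x-\tilde u|^2\bigr),
\]
one notes that the right-hand side is strictly increasing in $s$ by \eqref{vel-4}. Hence for each direction $\tilde x\in E_{\log\alpha}$ there is a unique threshold $s_\alpha(\tilde x)$ beyond which the integrated bound exceeds $\alpha$, with equality at $s=s_\alpha(\tilde x)$. One then uses Proposition~\ref{propo:equiv-meas-area} to write the Gaussian measure of the level set as an $(s,\tilde x)$-integral: the Jacobian $H(0,\tilde x)\simeq\sqrt{\log\alpha}$ supplies one factor of $\sqrt{\log\alpha}$, the $s$-integral of $e^{-R(D_s\tilde x)}$ from $s_\alpha(\tilde x)$ onward contributes a factor $\simeq (\log\alpha)^{-1}\,e^{-R(D_{s_\alpha(\tilde x)}\tilde x)}$ because $\partial_s R(D_s\tilde x)\simeq|D_s\tilde x|^2\gtrsim\log\alpha$, and finally the equality at the threshold converts $e^{-R(D_{s_\alpha}\tilde x)}$ into $\alpha^{-1}\int\exp(-c|\tilde x-\tilde u|^2)f(u)\,d\gamma_\infty(u)$, after which Fubini and $\int_{E_{\log\alpha}}\exp(-c|\tilde x-\tilde u|^2)\,dS(\tilde x)\lesssim 1$ close the estimate. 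No covering, no dyadic decomposition, no counting. Your proposal is conceptually closer to the forbidden-zone machinery the paper reserves for the global small-$t$ case (Section~\ref{s:The global case}); importing it here is both harder and unnecessary. To repair your argument you would either have to complete the cap-counting and overlap control you identify as the obstacle, or abandon it for the threshold-and-Fubini argument above.
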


\begin{proof}
We can assume that $f\ge 0$. Looking at the arguments in Section
\ref{simplifications}, items (3) and (4), we see that it is suffices to
 consider
points $x\in 
 {\mathcal E_\alpha}$.
For both  $x$ and $u$ we use
 the coordinates introduced in  \eqref{def-coord} with $\beta=\log\alpha$, that is,
\begin{equation*}
x=D_{s}\tilde x,
\qquad
u=D_{ s'}
\tilde u, 
\end{equation*}
where $\tilde x, \tilde u
\in E_{\log \alpha}$ and  $s,s' \in\R$.

From \eqref{stort} we have
\begin{align}\label{ineq:ktxu}
K_t
(x,u)
&\lesssim   \exp
(R( x))\exp
 \big(
- c\,
\big|
D_{-t}u-x\big|^2   
\big)
\end{align}
 for $t> 1$
 and $x,u\in\R^n$.
Since 
$x
\in{\mathcal E_\alpha}$
and 
$D_{-t}u=
D_{s'-t} \tilde u,
$
we can apply  
Lemma~\ref{lemma-peter-coord}~{\rm{(i)}},
getting
\begin{align*}
\big| D_{-t}u-x
\big|
\gtrsim
\big|
\tilde x-\tilde u\big|,
\end{align*}
so that
\begin{align*}
\int
 K_t
(x,u)
f(u)\,\misgausskd
 (u)
&\lesssim
\exp
 \big(
R(D_s \tilde x) \big)
\int 
\exp
 \big(
- c\, \big|
\tilde x-\tilde u\big|^2
\big)\,f(u)\,\misgausskd
 (u).
\end{align*}
In view of \eqref{vel-4},  the right-hand side here is strictly
 increasing in $s$,
and therefore the inequality
\begin{equation}\label{equality-s-alfa}
\exp
 \big(
R(D_s \tilde x) \big)
\int 
\exp
 \big(
- c\, \big|
\tilde x-\tilde u\big|^2
\big)\,f(u)\,\misgausskd
 (u)
>
\alpha
\end{equation} holds
if and only if $s>
s_\alpha (\tilde x)$
for some  function
$\tilde x\mapsto s_\alpha (\tilde x)$,
with equality for $s=s_\alpha (\tilde x)$.
Since $\alpha>2$ and  $\| f\|_{L^1 (\gamma_\infty)}=1$, it follows that $s_\alpha (\tilde x)>0$.

For some $C$,  the set
of points  $x\in {\mathcal E_\alpha}$ where the supremum in  \eqref{thesis-mixed-Di-tlarge-v}
is larger than $C\alpha$ is contained in the set $\mathcal A(\alpha)$
of points $D_s \tilde x\in
 {\mathcal E_\alpha}$ fulfilling \eqref{equality-s-alfa}.
We use Proposition~\ref{propo:equiv-meas-area} to estimate the 
$\gamma_\infty$ measure of this set. Observe that 
$H(0,\tilde x) \simeq |\tilde x|\simeq \sqrt{\log\alpha}$ and that 
 $D_s \tilde x\in {\mathcal E_\alpha}$ implies  $s\lesssim 1$, so that also
$e^{-s\tr B} \lesssim 1$. We get
\begin{align*}
\gamma_\infty (\mathcal A(\alpha)\cap {\mathcal E_\alpha})
&=
\int_{\mathcal A(\alpha)\cap {\mathcal E_\alpha}}
e^{-R(x)}
dx
\\
&\lesssim
 { \sqrt{\log \alpha}}
\int_{E_{\log\alpha}}
\int_{s_\alpha (\tilde x)}^{C}
e^{-R(D_s \tilde x)}
\,
dS(\tilde x)\,ds
\\
&
\lesssim
 { \sqrt{\log \alpha}}
\int_{E_{\log\alpha}}
\int_{s_\alpha (\tilde x)}^{+\infty}
\exp
 \left(
-
{R( D_{s_\alpha (\tilde x)} 
\tilde x)
-c\log\alpha\,
(s-s_\alpha (\tilde x)})
 \right)
 \,ds\,
dS(\tilde x) ,
\end{align*}
where the last inequality follows from 
\eqref{vel-4}, 
since
$
|D_s \tilde x|^2\gtrsim
|\tilde x|^2\simeq \log\alpha.
$
Integrating in $s$, we obtain
\begin{align*}
\gamma_\infty (\mathcal A
(\alpha)\cap{\mathcal E_\alpha})
&\lesssim \frac{1}{\sqrt{\log \alpha}}\int_{E_{\log\alpha}}
\exp
 \big(-R(
D_{{s_\alpha  (\tilde x)}}  
\tilde x)
 \big)
\,dS(\tilde x) .
\end{align*}
Now combine this estimate
with the case of equality in  \eqref{equality-s-alfa} and change  the order of integration, to get
\begin{align*}
\gamma_\infty
 (\mathcal A
 (\alpha)\cap{\mathcal E_\alpha})
&\lesssim
\frac{1}{\alpha \sqrt{\log \alpha}}
\int
 \int_{E_{\log\alpha}}
\exp
 \big(
- c\, \big|
\tilde x-\tilde u\big|^2
\big) \,dS(\tilde x)\,f(u)\,\misgausskd
 (u)\\
&
\lesssim
\frac{1}{\alpha \sqrt{\log \alpha}}
\int
f(u)\,\misgausskd
 (u)\,,
\end{align*}
which proves
 Proposition \ref{propo-mixed-glob-enhanced}.
\end{proof}

Finally,  we show that
 the factor $1/\sqrt{\log\alpha}$ in 
\eqref{thesis-mixed-Di-tlarge-v}  
is sharp.
\begin{proposition}
For any  $t> 1$ and any large $\alpha$,
 there exists 
a function $ f$, normalized in $L^1 (\gamma_\infty)$
 and such that
\begin{equation*} 
\gamma_\infty
\left\{x :|\mathcal H_t f(x)|
 > \alpha\right\} \simeq \frac{1}{\alpha\sqrt{\log \alpha}}\,.
\end{equation*}
\end{proposition}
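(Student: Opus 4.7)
The matching upper bound of order $1/(\alpha\sqrt{\log\alpha})$ is Proposition~\ref{propo-mixed-glob-enhanced}, so it suffices to construct a normalized $f$ realizing this order from below. My plan is to take $f$ as a narrow normalized bump approximating a Dirac mass at a carefully chosen point $u_0$; the mass then travels under the kernel to a peak at $D_{-t}u_0$, which I place just outside the ellipsoid $E_{\log\alpha}$.

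Concretely, I would fix a large constant $A>0$, pick any $v_0\in\R^n$ with $R(v_0)=\log\alpha+A$, set $u_0=D_t v_0$, and take
$f=\chi_{B(u_0,\varepsilon)}/\gamma_\infty(B(u_0,\varepsilon))$
for some small $\varepsilon>0$ depending only on $t$ and the matrices $Q,B$. Then $\|f\|_{L^1(\gamma_\infty)}=1$, and the formula \eqref{defKR} shows that for $x$ in any fixed bounded neighbourhood of $v_0$ and $u\in B(u_0,\varepsilon)$, one has $K_t(x,u)/K_t(x,u_0)=1+O(\varepsilon)$ uniformly in $\alpha$. Hence, for $\varepsilon$ small enough, $\mathcal H_t f(x)\ge \tfrac12 K_t(x,u_0)$ on the set we shall use, and the task reduces to exhibiting a set $A_\alpha\subset\R^n$ with $\gamma_\infty(A_\alpha)\gtrsim 1/(\alpha\sqrt{\log\alpha})$ on which $K_t(x,u_0)>2\alpha$.

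Writing $x=v_0+\delta$ and using $D_{-t}u_0=v_0$, the formula \eqref{defKR} together with Proposition~\ref{stima-prep-loc-t-large} and Lemma~\ref{stimadet}\,{\rm{(i)}} yield
\[
K_t(v_0+\delta,u_0)\simeq
\exp\Big(R(v_0)+\langle Q_\infty^{-1}v_0,\delta\rangle+R(\delta)
-\tfrac12\langle(Q_t^{-1}-Q_\infty^{-1})D_t\delta,D_t\delta\rangle\Big),
\]
where the last quadratic form is $\simeq|\delta|^2$ for $t>1$ fixed. Decomposing $\delta=h\mathbf n+\delta_\perp$ along $\mathbf n=Q_\infty^{-1}v_0/|Q_\infty^{-1}v_0|$, the linear term becomes $h|Q_\infty^{-1}v_0|\simeq h\sqrt{\log\alpha}$. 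Choosing $A$ sufficiently large and fixing small absolute constants $c_0,c_1>0$, I would verify that the cylinder
\[
A_\alpha=v_0+\Bigl\{\delta:-\tfrac{A}{2\sqrt{\log\alpha}}\le h\le c_0,\;|\delta_\perp|\le c_1\Bigr\}
\]
lies in $\{K_t(\cdot,u_0)>2\alpha\}$: for $h\le 0$ the gain $A$ absorbs both the negative linear term and the $O(1)$ quadratic correction, while for $h\ge 0$ the positive linear term dominates.

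It remains to compute $\gamma_\infty(A_\alpha)$. Using $R(v_0+\delta)=R(v_0)+\langle Q_\infty^{-1}v_0,\delta\rangle+R(\delta)$,
\[
\gamma_\infty(A_\alpha)\simeq e^{-R(v_0)}
\int_{|\delta_\perp|\le c_1}\int_{-A/(2\sqrt{\log\alpha})}^{c_0}
e^{-h|Q_\infty^{-1}v_0|-R(\delta)}\,dh\,d\delta_\perp.
\]
The factor $e^{-R(v_0)}=e^{-A}/\alpha$ produces the weight $1/\alpha$; the $h$-integral equals $|Q_\infty^{-1}v_0|^{-1}\bigl(e^{A/2}-e^{-c_0|Q_\infty^{-1}v_0|}\bigr)\simeq 1/\sqrt{\log\alpha}$ for large $\alpha$; and the transverse integration is $\simeq 1$. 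Multiplying yields $\gamma_\infty(A_\alpha)\gtrsim 1/(\alpha\sqrt{\log\alpha})$, and combined with Proposition~\ref{propo-mixed-glob-enhanced} this proves the claimed $\simeq$. The delicate point of the whole argument is the joint calibration of $A$, $c_0$, $c_1$ and $\varepsilon$ so that the $O(1)$ quadratic corrections and the Dirac-approximation errors are absorbed simultaneously and uniformly in $\alpha$; the rest reduces to the one-dimensional Gaussian integration that manufactures the sharp factor $1/\sqrt{\log\alpha}$.
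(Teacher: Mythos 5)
Your construction is essentially the one the paper uses: a Dirac approximant located at $D_t z$ for a point $z$ near the ellipsoid $E_{\log\alpha}$, the observation from the sharp two-sided Mehler bound that $K_t(x,\cdot)\simeq e^{R(x)}$ on a unit ball around $z$, and a half-ball/slab measure computation producing the factor $e^{-\log\alpha}/\sqrt{\log\alpha}$. The paper takes $R(z)=\log\alpha$ and accepts the level set $\{\mathcal H_tf > c\alpha\}$ up to constants, whereas you push $v_0$ out to $R(v_0)=\log\alpha+A$ and let the gain $e^A$ swallow the $O(1)$ corrections so as to land directly in $\{K_t>2\alpha\}$; this is only a cosmetic variation and the two arguments are otherwise identical, with your version supplying the explicit calibration that the paper's brief sketch leaves implicit.
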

\begin{proof}
  Take a point
 $z$ with $R(z)=\log\,\alpha$, and let $f$ be (an approximation of) a Dirac measure at the point
 $u=D_t z$.
 Then, as a consequence of \eqref{stort},
$K_t
(x,u)
\simeq   \exp
(R( x))$
in the ball $B(D_{-t}u,1)=B(z,1)$. 
We then have
$\mathcal H_t f(x)=K_t (x,u)\gtrsim \alpha$
in the set
$\mathcal B=\{x\in B(z,1):
R(x)>R(z)
\}$, whose measure is
$$\gamma_\infty\, (\mathcal B)
\simeq e^{-R(z)}\,
\frac{1}{\sqrt{R(z)}}
=
\frac{1}{\alpha\,\sqrt{\log \,\alpha}}.
$$
\end{proof}

\section{The local case  for small $t$}\label{s:The local case}


\begin{proposition}\label{stima1loc}
If 
$(x,u) \in L
$ 
and $0<t\le 1$, then
\begin{equation*}
\big|K_{t}
(x,u)\big|\lesssim
\,\frac{
\exp \big(
R(x) \big)
}{  t^{n/2}
}
\exp{
 \Big(-c\,
 \frac{|
 u-x|^2}{ t} \,
\Big)
}
\,.
\end{equation*}

\end{proposition}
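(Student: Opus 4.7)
The plan is to start from the small-$t$ upper bound \eqref{litet}, which already gives
\[
K_t(x,u) \lesssim \frac{e^{R(x)}}{t^{n/2}}\exp\Bigl(-c\,\frac{|u - D_t x|^2}{t}\Bigr),
\]
and then replace $D_t x$ by $x$ in the exponent, paying only a multiplicative constant that can be absorbed into the implicit constant.

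The key identity is to expand
\[
|u - D_t x|^2 = |u - x|^2 - 2\langle u - x,\, D_t x - x\rangle + |D_t x - x|^2,
\]
from which the Cauchy--Schwarz inequality yields
\[
|u - D_t x|^2 \;\ge\; |u - x|^2 - 2\,|u-x|\,|D_t x - x|.
\]
Now I bring in the two key facts available in this regime. First, \eqref{xminusDtx} gives $|D_t x - x| \lesssim t|x|$ for $0 < t \le 1$. Second, the local hypothesis $(x,u) \in L$ means $|u - x| \le 1/(1+|x|)$. Multiplying these,
\[
|u-x|\,|D_t x - x| \;\lesssim\; \frac{t|x|}{1+|x|} \;\le\; t.
\]
Consequently, $|u - D_t x|^2 \ge |u - x|^2 - C t$, so that
\[
-c\,\frac{|u - D_t x|^2}{t} \;\le\; -c\,\frac{|u - x|^2}{t} + cC,
\]
and therefore $\exp\bigl(-c\,|u - D_t x|^2/t\bigr) \lesssim \exp\bigl(-c\,|u-x|^2/t\bigr)$.

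Plugging this into the bound from \eqref{litet} gives the claimed inequality. No step is especially delicate; the only point to notice is why the cross term does not blow up when $|x|$ is large. It is precisely the local condition $|u-x| \le 1/(1+|x|)$ that compensates the growth factor $|x|$ coming from $|D_t x - x| \lesssim t|x|$, yielding the universal bound $t$ for the cross term and hence for the error in passing from $|u - D_t x|^2$ to $|u-x|^2$ in the Gaussian exponent.
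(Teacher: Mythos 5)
Your proposal is correct and follows essentially the same route as the paper's proof: both start from the small-$t$ kernel bound \eqref{litet}, expand $|u-D_t x|^2$ around $|u-x|^2$, bound the cross term via $|x - D_t x|\lesssim t|x|$ from \eqref{xminusDtx} together with the local condition $|u-x|\le 1/(1+|x|)$, and absorb the resulting additive constant into the exponential.
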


\begin{proof} 
In view of \eqref{litet}, it is enough to show that
\begin{equation}\label{final-aim-stima-loc}
\frac{|u-D_t x|^2}t \ge \frac{|u-x|^2}t -C.
\end{equation}
 
We write
\begin{align*}
  |u-D_t\, x|^2 = |u-x +x-D_t \,x|^2 =  |u-x|^2 +2 \langle u-x, x-D_t\, x\rangle
+ |x-D_t\, x|^2\\
\ge |u-x|^2 -2 |u-x|\,|x-D_t\, x|.
\end{align*}
By \eqref{xminusDtx},
\[ 
 |u-x|\, |x-D_t\, x| \lesssim
 |u-x|\,t\, |x|\le t
\]
since $(x,u) \in L$,   and \eqref{final-aim-stima-loc} follows.
\end{proof}

\begin{proposition}\label{propo-locale}
The maximal operator $\mathcal H_*^{L}
$ is of weak type $(1,1)$ with respect to the invariant measure
 $\misgaussk$.
\end{proposition}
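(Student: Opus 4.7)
The plan is to dominate $\mathcal{H}_*^{L}$ pointwise by a local Hardy--Littlewood maximal operator, and then invoke the local doubling of $\gamma_\infty$ on balls of radius at most $1/(1+|x|)$.

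First I would apply the bound of Proposition \ref{stima1loc} inside $\mathcal{H}_t^{L}f(x)$ and absorb the Gaussian weights. The crucial observation is that on $L$ one has $R(x)-R(u)=\tfrac12\langle Q_\infty^{-1}(x+u),\,x-u\rangle = O(1)$, since $|x-u|\le 1/(1+|x|)$ while $|x+u|\lesssim 1+|x|$. Hence $e^{R(x)}\,d\gamma_\infty(u)\lesssim du$ on $L$, and the kernel bound converts into a truncated Gauss--Weierstrass kernel against Lebesgue measure:
\[
\mathcal{H}_t^{L}f(x)\lesssim \int_{|u-x|\le 1/(1+|x|)} \frac{1}{t^{n/2}}\exp\Big(-c\,\frac{|u-x|^2}{t}\Big)\,|f(u)|\,du,\qquad 0<t\le 1.
\]
A standard dyadic decomposition in $|u-x|$ (combined with the trivial estimate for $\sqrt{t}\gtrsim 1/(1+|x|)$) and the supremum over $t$ then yield the pointwise majorization $\mathcal{H}_*^{L}f(x)\lesssim M^{\mathrm{loc}}f(x)$, where
\[
M^{\mathrm{loc}}f(x):=\sup_{0<r\le 1/(1+|x|)}\frac{1}{|B(x,r)|}\int_{B(x,r)}|f(u)|\,du.
\]

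Next I would transfer $M^{\mathrm{loc}}$ to the Gaussian setting. The same Taylor identity for $R$ shows that $e^{-R(\cdot)}\simeq e^{-R(x)}$ uniformly on $B(x,r)$ whenever $r\le 1/(1+|x|)$; hence $\gamma_\infty(B(x,r))\simeq e^{-R(x)}|B(x,r)|$, so $M^{\mathrm{loc}}f(x)$ is comparable to the analogous maximal average of $|f|$ against $\gamma_\infty$. The very same computation yields the key local doubling property $\gamma_\infty(B(x,5r))\simeq \gamma_\infty(B(x,r))$ for all $r\le 1/(1+|x|)$.

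Finally, the weak type $(1,1)$ is obtained by a routine Vitali argument. For each $x$ in the super-level set $\{M^{\mathrm{loc}}f>\alpha\}$, I would select $r_x\le 1/(1+|x|)$ with $\int_{B(x,r_x)}|f|\,d\gamma_\infty>\alpha\,\gamma_\infty(B(x,r_x))$; the $5r$-covering lemma (applied on an exhaustion of $\R^n$ by compacts) extracts a pairwise disjoint subfamily $\{B(x_i,r_{x_i})\}$ whose $5$-dilates cover the super-level set, and the local doubling then gives
\[
\gamma_\infty\{M^{\mathrm{loc}}f>\alpha\}\;\le\;\sum_i \gamma_\infty(B(x_i,5r_{x_i}))\;\lesssim\;\sum_i \gamma_\infty(B(x_i,r_{x_i}))\;\le\;\frac{\|f\|_{L^1(\gamma_\infty)}}{\alpha}.
\]
The main point requiring care is the uniform local doubling at the scale $1/(1+|x|)$, which is exactly what forces the radius in $M^{\mathrm{loc}}$ to shrink with $|x|$; once this is verified from the Taylor formula for $R$, the remaining steps are standard harmonic analysis.
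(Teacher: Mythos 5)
Your argument is correct and matches the paper's approach: both reduce the local maximal operator via Proposition~\ref{stima1loc} to a truncated Gauss--Weierstrass kernel and then exploit the comparability of $e^{-R(\cdot)}$ to a constant on balls of radius $\lesssim 1/(1+|x|)$ to pass between Lebesgue and Gaussian measure. The paper merely observes that the resulting operator is of weak type $(1,1)$ for Lebesgue measure and cites \cite[Section 3]{JLMS} for the transfer, whereas you spell out the local doubling of $\gamma_\infty$ and the Vitali covering argument explicitly; the underlying ideas are the same.
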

\begin{proof} 
The proof is standard, since 
Proposition \ref{stima1loc} implies
\begin{align*}
\mathcal H_*^{L}
f(x)
&\lesssim
\sup_{0<t\le  1}
\frac{\exp
\big( R(x)\big)}{
t^{n/2}}
 \int
\exp
 \Big(-c\,
 \frac{|
 x-u|^2
 }{ t} \,
\Big)
\,\chi_{L}(x,u)\,f(u)
\,
 d\gamma_\infty(u). 
\end{align*}
The supremum here defines an operator of 
weak type $(1,1)$
 with respect to Lebesgue measure in $\R^n$.
 From this  the proposition follows, cf.
\cite[Section 3]{JLMS}.
\end{proof}

\medskip

\section{The global case for   small $t$}\label{s:The global case}
 In this section, we conclude the proof of 
Theorem \ref{weaktype1}.
\begin{proposition}\label{stima-tipo-debole-misto}
The maximal operator $\mathcal H_*^{G}$
 is of weak type $(1,1)$ with respect to the invariant measure
 $\misgaussk$.
\end{proposition}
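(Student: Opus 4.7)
The plan is to execute the forbidden zones technique of \cite{Peter} in the spirit of the proof of Proposition \ref{propo-mixed-glob-enhanced}, but adapted to the small-$t$ regime. By the reductions of Section \ref{simplifications}, items (1), (2), (3) and (5), I may assume $f \ge 0$, $\|f\|_{L^1(\gamma_\infty)}=1$, $\alpha$ large, and restrict the supremum defining $\mathcal H_*^G$ to $x \in \mathcal E_\alpha$. In the polar coordinates of Section \ref{coord-choice} with $\beta = \log\alpha$, I write $x = D_s \tilde x$ and $u = D_{s'}\tilde u$ with $\tilde x, \tilde u \in E_{\log\alpha}$; for $x \in \mathcal E_\alpha$ the parameter $s$ lies in a bounded interval by \eqref{vel-4}.

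The central step is to combine the upper bound in \eqref{litet} with Lemma \ref{lemma-peter-coord} applied to the pair $(u, D_t x)$, whose relevant hypothesis $R(D_t x) \simeq \log\alpha$ is satisfied on $\mathcal E_\alpha$. This produces the separation
\begin{equation*}
|u - D_t x|^2 \gtrsim |\tilde u - \tilde x|^2 + (\log\alpha) \,(s' - s - t)^2,
\end{equation*}
converting the kernel bound into a product of two Gaussian factors in the tangential ($\tilde u$) and radial ($s'$) directions along the curve $s'' \mapsto D_{s''}\tilde x$. From here I mimic the scheme of Proposition \ref{propo-mixed-glob-enhanced}: for each fixed $t$ I introduce a threshold $s_\alpha(\tilde x, t)$ so that the integral inequality $\mathcal H_t f(D_s \tilde x) > \alpha$ is characterized by $s > s_\alpha(\tilde x, t)$, bound the $\gamma_\infty$-measure of the corresponding bad set via Proposition \ref{propo:equiv-meas-area}, use \eqref{vel-4} to convert the $s$-integration into a factor of $1/\log\alpha$, and exchange the order of integration to obtain a bound of order $1/\alpha$ at each scale.

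To handle the supremum over $t \in (0, 1]$ I perform a dyadic decomposition $t \in (2^{-k-1}, 2^{-k}]$. The effective range of $k$ is controlled by Lemma \ref{lemma-sost-peter}: as in the verification of \eqref{claimv}, that lemma forces $K_t(x, u) \chi_G \lesssim \alpha$ pointwise whenever $t$ is so small that the Gaussian window cannot reach the far side of the global region. The main obstacle is then to sum the remaining dyadic contributions without losing a $\log \log \alpha$ factor; the remedy, following \cite{Peter}, is to assign to each bad $x$ a single optimal scale $t(x)$ realizing the supremum and to invoke Lemma \ref{lemma-Peter-forbidden} on the resulting forbidden tubes, together with the mass identity $\|f\|_{L^1(\gamma_\infty)}=1$. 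This final geometric step, which packages all the bad $x$'s into a family of essentially non-overlapping tubes whose total $\gamma_\infty$-mass is bounded by $1/\alpha$, is the technical heart of the proof and the place where the forbidden zones method enters in its full strength.
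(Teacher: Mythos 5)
Your overall orientation --- reductions from Section \ref{simplifications}, forbidden zones \`a la \cite{Peter}, and Lemma \ref{lemma-Peter-forbidden} for the mass of a tube --- is right, but the proposal has substantive gaps. First, your central decomposition is in dyadic time scales $t\in(2^{-k-1},2^{-k}]$; the paper instead decomposes in dyadic annuli $2^{m-1}\sqrt t<|u-D_tx|\le 2^m\sqrt t$ at \emph{every} scale $t$ simultaneously, because this is what produces the Gaussian gain $e^{-c2^{2m}}$ needed to sum the pieces. A decomposition in $t$ alone provides no comparable convergent factor, and you yourself foresee a $\log\log\alpha$ loss without supplying a mechanism to avoid it. Second, the plan to mimic Proposition \ref{propo-mixed-glob-enhanced} does not transfer to small $t$: for $t\le 1$ the bound \eqref{litet} carries a prefactor $t^{-n/2}$, so any threshold $s_\alpha(\tilde x,t)$ depends on $t$ in an essential way, and the supremum over $t$ is not automatically absorbed as it is for $t>1$. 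Moreover the claimed separation $|u-D_tx|^2\gtrsim|\tilde u-\tilde x|^2+(\log\alpha)(s'-s-t)^2$ is not a direct consequence of Lemma \ref{lemma-peter-coord}: part (ii) requires the second point's radial coordinate to be nonnegative, which is not guaranteed for $u$, and parts (i) and (ii) give two separate lower bounds rather than a sum of squares.

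Third, and most important, the forbidden zones construction --- which you correctly identify as the technical heart --- is never carried out. In the paper, for each fixed annular index $m$ one recursively selects points $x^{(\ell)}$ minimizing $R$ over the residual bad set, attaches to each a time $t_\ell$ realizing \eqref{pha}, a forbidden tube $\mathcal Z^{(\ell)}$ of base radius $A\,2^{3m}\sqrt{t_\ell}$, and a ball $\mathcal B^{(\ell)}$ of radius $2^m\sqrt{t_\ell}$ around $D_{t_\ell}x^{(\ell)}$, then proves the balls pairwise disjoint using both parts of Lemma \ref{lemma-peter-coord}, and bounds $\gamma_\infty(\mathcal Z^{(\ell)})$ by combining Lemma \ref{lemma-Peter-forbidden} with the kernel lower bound \eqref{mixed-bound-ell-1}. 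It is this recursion, not a dyadic split, that handles all $t\in(0,1]$ at once. Without it your argument is a plan, not a proof.
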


\begin{proof}
We take $f$ and $\alpha$ as in items (1) and (2)
of Section \ref{simplifications}.
Then item (5) tells us that we need only consider
$\mathcal H_*^{G} f(x)$ for $x\in\mathcal E_\alpha$.

For $m\in \N$ and $0<t\le 1$, 
we introduce regions $\mathcal S^{m}_t$.
If $m>0$,  we let
\begin{align*}
\mathcal S^{m}_t
&=\left\{(x,u)\in G:
2^{m-1} \sqrt t< |u-D_t x |\le 2^{m} \sqrt t
\,\right\}.
\end{align*}
If $m=0$,
we replace the condition
$2^{m-1} \sqrt t< |u-D_t x |\le 2^{m} \sqrt t$ by
$|
u-D_t x | \le  \sqrt t$.
Note that for any fixed $t\in (0,1]$ these sets form a partition of 
$G$.

In the set ${\mathcal S^{m}_t}$ we have, because of  \eqref{litet},
\begin{align*}
K_t (x,u) &
\lesssim
\frac{
\exp
 (R(x))
}{  t^{n/2}
}
\exp 
\left({
-c{2^{2 m} }  }\right).
\end{align*}
Then setting
\begin{align}\label{ktm1m2}
{\mathcal K}_t^{{m}} 
(x,u)
&=
\frac{
\exp  (R(x))
}{  t^{n/2}
}
\,\chi_{\mathcal S^{m}_t}(x,u),
\end{align}
one has, for all $(x,u)\in G$ and $0<t<1$,
\begin{equation*}
K_t (x,u)\lesssim
\sum_{m=0}^\infty \exp 
\left({
-c{2^{2 m} }  }\right)
{\mathcal K}_t^{{m}} 
(x,u)
\,.
\end{equation*}
Hence, it suffices to  prove
that for $m = 0,1,\dots$ 
\begin{equation}\label{obiettivo-finale}
\gamma_\infty
\left\{
x\in{\mathcal E_\alpha}:
\sup_{0<t\le 1}
 \int
{\mathcal K}_t^{{m}} 
\!
(x,u)\, f(u)\,
\misgausskd
(u) 
\!
>\alpha \right\}
 \lesssim
\frac{2^{Cm}}{\alpha}
,
\end{equation}
for large $\alpha$ and some $C$,
since this will allow summing in $m$
 in the space  $L^{1,\infty}(\gamma_\infty)$.

Fix $m\in \Bbb N$ and assume that
 $(x,u) \in S_t^m$ for some $t \in (0,1]$, so that
$|u-D_t x|
\leq 2^{m}\sqrt{t}$.
Then Lemma~\ref{lemma-sost-peter}
leads to
\begin{align*}
1& \lesssim 
 (1+|x|)^4 
 t^2+(1+|x|)^2 \,
 2^{2m}\,t 
\le
 ((1+|x
|)^2 \,
2^{2m}\,t)^2+
 (1+|x|)^2 \,2^{2m}\,t.
\end{align*}
Consequently, a point $x\in\mathcal E_\alpha$ satisfies
\begin{equation}\label{stima-t-}
 (1+|x|)^2 \,
 2^{2m}\,t \gtrsim 1
\end{equation}
as soon as there exists a point $u$ with $\mathcal K_t^{m}(x,u)\neq 0$,
and then
$t\ge \varepsilon>0$ for some
$\varepsilon=\varepsilon(\alpha,m)
>0$.
Hence the supremum in  \eqref{obiettivo-finale}
will be the same if taken only  over 
$\varepsilon\le t\le 1$,
and it follows that   this supremum is a continuous function
of $x\in  {\mathcal E_\alpha}$.
  
\medskip

To prove \eqref{obiettivo-finale},
the idea, which goes back to \cite{Peter}, is
 to construct a finite sequence of pairwise disjoint balls
$\big(\mathcal B^{(\ell)}\big)_{\ell=1}^{\ell_0}$ in $\R^n$ and a 
finite sequence of sets $\big(\mathcal Z^{(\ell)}\big)_{\ell=1}^{\ell_0}$ in $\R^n$, called forbidden zones. These zones will together cover
 the level set in \eqref{obiettivo-finale}.
We claim that
  \begin{equation}\label{final-subset}
\left\{x\in {\mathcal E_\alpha}:
\sup_{\varepsilon\le t\le 1}
\int {\mathcal K}_t^{{m}}   (x,u)\,f(u)\,\misgausskd (u)\,
\ge \alpha
\right\}\subset \bigcup_{\ell=1}^{\ell_0}\mathcal Z^{(\ell)},
\end{equation}
that for each $\ell$
\begin{align}\label{stima-con-exp} 
&\misgaussk (\mathcal Z^{(\ell)})
\lesssim
\frac{2^{Cm}}{\alpha} 
 \int_{\mathcal B^{(\ell)}}f(u) 
\,\misgausskd
 (u),
\end{align}
and that the $ \mathcal B^{(\ell)}$ are pairwise disjoint. This would imply 
\begin{align*} 
\misgaussk \Big(\bigcup_{\ell=1}^{\ell_0} \mathcal Z^{(\ell)}  \Big)
 \lesssim
\frac{2^{Cm}}{\alpha}\,
 \sum_{\ell=1}^{\ell_0}
  \int_{\mathcal B^{(\ell)}}f(u)\,\misgausskd
 (u)\notag
 \lesssim
\frac{2^{Cm}}{\alpha}
,\end{align*}
and thus also 
\eqref{obiettivo-finale}
and 
Proposition \ref{stima-tipo-debole-misto}.

\medskip

The  sets $\mathcal B^{(\ell)}$ and
 $\mathcal Z^{(\ell)}$
will be introduced by means of a sequence of
points $x^{(\ell)}$,\; $\ell=1,\ldots, \ell_0$,
which we define by recursion.
To start, we choose as $x^{(1)}$
a point 
where the quadratic form $R(x)$ takes its minimal value in the compact set
  \begin{equation*}                
  \mathcal A_1 (\alpha)=
\left\{x\in {\mathcal E_\alpha} :
\sup_{\varepsilon\le t\le 1}
\int 
{\mathcal K}_t^{{m}}   (x,u)\,f(u)\,\misgausskd
\ge \alpha
\right\} .
\end{equation*}
However, should this set be empty, 
   \eqref{obiettivo-finale} 
is immediate.

We now describe the recursion to construct   $x^{(\ell)}$ for $\ell \ge 2$.
Like  $x^{(1)}$, these points will satisfy
\[
\sup_{\varepsilon\le t\le 1}
\int 
{\mathcal K}_t^{{m}}   (x^{(\ell)},u)\,f(u)\,\misgausskd
\ge \alpha.
 \]
Once an  $x^{(\ell)},\;\ell\ge 1 $, is defined, we can thus by continuity 
choose 
 $t_\ell \in [\varepsilon, 1] $ such that
\begin{equation}\label{pha}
 \int {\mathcal K}_{t_\ell}^{{m}}   (x^{(\ell)},u)\,
f(u)\,\misgausskd \ge \alpha.
\end{equation}

Using this  $t_\ell$, we  associate with   $x^{(\ell)}$ the tube
\begin{equation*}
\mathcal Z^{(\ell)} = 
\left\{
D_{ s}\eta
\in \R^n
:\,s\ge 0,\; 
R{( \eta
)}= R(x^{(\ell)}),
\;
| \eta- x^{(\ell)}
|< 
A\, 2^{3m}\, \sqrt{t_{\ell}}\right\},
\end{equation*}
Here the constant $A>0$ is to be determined, depending only on 
$n$,
$Q$ and $B$.

All the $x^{(\ell)}$ will be minimizing points of $R(x)$. To avoid
having them too close to one another, we will not allow  $x^{(\ell)}$  
to be in any $\mathcal Z^{(\ell')}$ with $\ell' < \ell$. More precisely, assuming  
$x^{(1)}, \dots, x^{(\ell)}$ already defined, we will choose   $x^{(\ell+1)}$
as a minimizing point of $R(x)$ in the set
 \begin{equation}
\label{def:set}
  \mathcal A_{\ell+1} (\alpha)=
\left\{x\in {\mathcal E_\alpha}
  \setminus   \bigcup_{\ell'=1}^{\ell} \mathcal Z^{(\ell')}:\,
\sup_{\varepsilon\le t\le 1}
\int {\mathcal K}_t^{{m}}   (x,u)\,f(u)\,d\gamma_\infty(u)
\ge \alpha
\right\},
\end{equation}
provided this set is nonempty.  But if  $\mathcal A_{\ell+1} (\alpha)$
is empty,
the process stops with $\ell_0=\ell$ and \eqref{final-subset} follows.
We will  see that this actually occurs for some  finite $\ell$.

Now assume that  $\mathcal A_{\ell+1} (\alpha) \ne \emptyset$. In order 
to assure that
a minimizing point exists, we must verify that   $\mathcal A_{\ell+1} (\alpha)$
is closed and thus compact, although the $\mathcal  Z^{(\ell')}$ are not open.
To do so, observe that
for $1\le  \ell' \le \ell$,
 the minimizing property of $x^{(\ell')}$ means that there is no point in
$\mathcal A_{\ell'} (\alpha) $ with $R(x) < R(x^{(\ell')})$. Thus we have the inclusions
\begin{equation*}
  \mathcal A_{\ell+1} (\alpha) \subset  \mathcal A_{\ell'} (\alpha)
\subset \left\{x: R(x)\ge R(x^{(\ell')})\right\}, \qquad 1\le  \ell' \le \ell.
\end{equation*}
  It follows that
\begin{multline*}
   \mathcal A_{\ell+1} (\alpha) = \mathcal A_{\ell+1} (\alpha)\,\cap\, \bigcap_{1\le  \ell' \le\ell}
\{x: R(x)\ge R(x^{(\ell')})\} =\\ 
\bigcap_{\ell'=1}^{\ell} \left\{x\in {\mathcal E_\alpha} \setminus   \mathcal Z^{(\ell')}:
R(x)\ge R(x^{(\ell')}), \;    
\sup_{\varepsilon\le t\le 1}
\int {\mathcal K}_t^{{m}}   (x,u)\,f(u)\,d\gamma_\infty(u)
\ge \alpha\right\}.
\end{multline*}
The sets $ \{x\in {\mathcal E_\alpha} \setminus   \mathcal Z^{(\ell')}:
R(x)\ge R(x^{(\ell')})\}$ are closed in view of the choice of $\mathcal Z^{(\ell')}$.
This makes $  \mathcal A_{\ell+1} (\alpha)$  compact, and a minimizing point  
$x^{(\ell+1)}$ can be chosen. Thus the recursion is well defined.

We observe that    \eqref{stima-t-}
applies to $t_\ell$ and
$x^{(\ell)}$, and $|x^{(\ell)}|$ is large, 
so 
\begin{equation}\label{stima-t-prop}
 |x^{(\ell)}|^2\,
  2^{2m}\, t_\ell  \gtrsim 1.
\end{equation}
Further, we define balls
\begin{align*}
\mathcal B^{(\ell)}=&
\{
u
\in \R^n
:\,
|
 u  -D_{t_\ell} x^{(\ell)}
 |
 \le 2^{m} \sqrt{t_\ell}\,
\}\,.
\end{align*}
Because of \eqref{ktm1m2} 
and the definitions of $\mathcal K_t^m$ and ${\mathcal S^{m}_t}$, the inequality
\eqref{pha} implies
\begin{align}\label{mixed-bound-ell-1}
\alpha&\le
\frac
{\exp \left({
R(x^{(\ell)})   }\right)
}{  t_\ell^{n/2}}
\int_{\mathcal B^{(\ell)}
}f(u)\,\misgausskd
 (u).
\end{align}
It remains to verify the claimed properties of
$\mathcal B^{(\ell)}$ and 
$\mathcal Z^{(\ell)}$.
The proof
follows the lines of the proof of Lemma 6.2 in \cite{CCS}, with only slight modifications.
\begin{lemma}\label{lemma:disjoint}
The  balls 
$\mathcal B^{(\ell)}$ 
are pairwise disjoint.
\end{lemma}
\begin{proof}
Two balls
$\mathcal B^{(\ell)}$ 
and $\mathcal B^{(\ell')}$  with $\ell<\ell'$ will be disjoint if
 \begin{equation}\label{stima-distanza-tang}
\big|
D_{t_{\ell}} 
 x^{(\ell)}-
D_{t_{\ell'}} 
 x^{(\ell')}\big|
>
 2^m (\sqrt {t_\ell}+
 \sqrt{ t_{\ell'}}).
\end{equation}

By means of  our polar coordinates
with $\beta=R(x^{(\ell)})$,
we write
\begin{equation*}
x^{(\ell')}
=
D_{s} \tilde x^{(\ell')} 
\end{equation*}
for some $\tilde x^{(\ell')}$ with
$R(\tilde x^{(\ell')})=
R(x^{(\ell)})$ and some $s \in\R$.
Note that  $s\ge 0$, because
 $R(x^{(\ell')})\ge R( x^{(\ell)})$.
Since $x^{(\ell')}$ does not belong to the forbidden zone $ \mathcal Z^{(\ell)}$,
we must have
\begin{equation}\label{hypo1}
|
\tilde x^{(\ell')}-
x^{(\ell)}
|\ge A
 2^{3m} \sqrt {t_\ell}.
 \end{equation}

 We first assume that ${t_{\ell'}}
\ge M\, 2^{4m} \, t_\ell$, for some $M\ge 2$ to be chosen.
 Lemma~\ref{lemma-peter-coord}~{\rm{(ii)}}
 implies
\begin{align*}
\big|
D_{t_{\ell}} 
 x^{(\ell)}-
D_{t_{\ell'}} 
 x^{(\ell')}\big|
 =
 \big|
D_{t_{\ell}} 
 x^{(\ell)}-
D_{t_{\ell'}+s} 
\tilde x^{(\ell')}\big|
\gtrsim  
|x^{(\ell)}|\,( t_{\ell'}+s-t_\ell)
\gtrsim  
|x^{(\ell)}|\,
{ t_{\ell'}}.
\end{align*}
Using our assumption and then \eqref{stima-t-prop}, we get
\begin{equation*}
  |x^{(\ell)}|\,{ t_{\ell'}}\gtrsim 
|x^{(\ell)}|\, \sqrt{M} \, 2^{2m} \sqrt{ t_{\ell}}\, \sqrt{ t_{\ell'}}
\gtrsim \sqrt{M} \, 2^{m}\sqrt{ t_{\ell'}}
\simeq  \sqrt{M}\, 2^{m}\,
( \sqrt{ t_{\ell'}} +\sqrt{ t_{\ell}}). 
\end{equation*}

Fixing $M$ suitably large,
we obtain  \eqref{stima-distanza-tang} from the last two formulae.

It remains to consider the case when  ${t_{\ell'}}
< M\, 2^{4m}\,  t_\ell$. Then
\begin{equation*}
\sqrt{t_{\ell}}
>
\frac{2^{-2m-1}}{\sqrt{M}}
( \sqrt {t_{\ell'}}+\sqrt {t_\ell}).
\end{equation*}
Applying this to  \eqref{hypo1},
 we obtain
\eqref{stima-distanza-tang}
by choosing $A$ so that $A/\sqrt{M}$ is large enough.
\end{proof}

We next verify that the sequence $(x^{(\ell)})$ 
is finite.
For $\ell<\ell'$,   
 we have \eqref{hypo1}, and Lemma \ref{lemma-peter-coord} {\rm{(i)}}
implies
\begin{align*}
\big|
x^{(\ell')}-
 x^{(\ell)}
\big|&
\gtrsim A\,
 2^{3m} \sqrt {t_\ell}.
\end{align*}
Since 
$t_\ell\ge \varepsilon$, we see that the distance 
$\left|
x^{(\ell')}-
x^{(\ell)}
\right|$ 
is bounded below by a positive constant.
But all the $
x^{(\ell)}$
are contained in the bounded set 
$ {\mathcal E_\alpha}$, so they are finite in number.
Thus the set considered in \eqref{def:set}
must be empty for some $\ell$, and the recursion stops.  This implies 
\eqref{final-subset}.

We finally prove 
\eqref{stima-con-exp} .
Observe that the  forbidden zone
$\mathcal Z^{(\ell)}$
is a tube  as  defined in \eqref{zona}, with
 $a=A\, 2^{3m} 
 \sqrt{t_\ell}$ and 
 $\beta=R(x^{(\ell)})$. 
This value of  $\beta$ is large since $x^{(\ell)} \in {\mathcal E_\alpha}$,
and thus we can apply
Lemma 
\ref{lemma-Peter-forbidden}
to  obtain
\begin{align*}
\misgaussk (  \mathcal Z^{(\ell)})\lesssim
\frac{\big(  A 2^{3m} \sqrt{t_\ell}\big)^{n-1}}{
\sqrt{ R(x^{(\ell)})}}\, \exp\left(
{ -{
R(x^{(\ell)})   }}\right)
\notag
. \end{align*}
We bound the exponential here by  means of \eqref{mixed-bound-ell-1}
and observe that $R(x^{(\ell)}) \sim |x^{(\ell)}|^2$, getting
\begin{align*}\misgaussk (\mathcal Z^{(\ell)})\!\!
&\lesssim \frac{1}{\alpha |x^{(\ell)}|
{\sqrt{t_\ell}}} \,
 (A 2^{3m})^{n-1} \,
\int_{\mathcal B^{(\ell)}
}\!f(u)\misgausskd
 (u).
\end{align*}
As a consequence of
\eqref{stima-t-prop},
we obtain
\begin{align*} 
\misgaussk
 (\mathcal Z^{(\ell)})
\lesssim
\frac{2^{m}}{\alpha} \,
 \big(A 2^{3m}\big)^{n-1} \,
\int_{\mathcal B^{(\ell)}}f(u)\,\misgausskd
 (u)\,
\lesssim
\frac{2^{Cm}}{\alpha}\, 
 \, \int_{\mathcal B^{(\ell)}}f(u)\,\misgausskd
 (u),
\end{align*}
 proving 
\eqref{stima-con-exp}.
This  concludes  the proof of Proposition  \ref{stima-tipo-debole-misto}.
\end{proof}

\end{document}

\bibitem{Eldan} 
{\Red{R. Eldan and J. Lee,
Regularization under diffusion and anti-concentration of the information content,
{\em Duke Math. J.},
\textbf{167} (2018), 969--993.}}

\bibitem{Talagrand}
{\Red{M. Talagrand, 
{A conjecture on convolution operators, and a non- Dunford-Pettis operator on $L^1$},
{\em Israel J. Math.} \textbf{68} (1989), 82--88.}}